\documentclass[11pt]{amsart}

\usepackage{amsmath,amssymb,amsthm,mathtools,mathrsfs,xcolor}
\usepackage[a4paper,margin=2.7cm]{geometry}
\usepackage[colorlinks=true,linkcolor=blue,citecolor=blue,urlcolor=blue]{hyperref}

\newtheorem{theorem}{Theorem}[section]
\newtheorem{proposition}[theorem]{Proposition}
\newtheorem{lemma}[theorem]{Lemma}
\newtheorem{corollary}[theorem]{Corollary}
\theoremstyle{definition}

\newtheorem{example}[theorem]{Example}
\theoremstyle{remark}
\newtheorem{remark}[theorem]{Remark}
\newtheorem*{Main}{Main Theorem}
\newtheorem*{Nakai}{Nakai Conjecture}

\newtheorem*{ack}{Acknowledgements}

\DeclareMathOperator{\Der}{Der}
\DeclareMathOperator{\Diff}{Diff}
\DeclareMathOperator{\Hess}{Hess}
\DeclareMathOperator{\adj}{adj}
\DeclareMathOperator{\cof}{cof}

\newcommand{\m}{\mathfrak m}
\newcommand{\Jac}{J}

\title[Nakai conjecture]{Nakai conjecture for isolated weighted homogeneous hypersurface singularities}

\author{Stephen S.-T. Yau}
\address{Department of Mathematical Sciences, Tsinghua University, Beijing, 100084, P. R. China}
\email{yau@uic.edu}

\author{Qiwei Zhu}
\address{Department of Mathematical Sciences, Tsinghua University, Beijing, 100084, P. R. China}
\email{zhuqw19@mails.tsinghua.edu.cn}

\author{Huaiqing Zuo}
\address{Department of Mathematical Sciences, Tsinghua University, Beijing, 100084, P. R. China}
\email{hqzuo@mail.tsinghua.edu.cn}

\begin{document}

\begin{abstract}
The long-standing Nakai Conjecture asks whether differential operators detect singularities on algebraic varieties.  On a smooth variety over a field of characteristic zero, the ring of differential operators is generated by first order derivations.  Nakai asked whether the converse holds.  In this paper we prove that, for the coordinate ring of an isolated weighted homogeneous hypersurface singularity, there always exists a second order derivation which is not generated by first order derivations.  The main point is to combine Singh's exact sequence for second order differential operators with a covariant Hessian construction.  After a general linear change of coordinates, the diagonal weighted Euler field becomes a linear Euler field.  This permits the use of a generic hyperplane section without losing the Euler identity.  A polar-curve argument and Saito's Jacobian non-containment theorem then provide the required second order derivation.

Keywords.  Differential operators, higher derivations, isolated singularity, weighted homogeneous singularity, Nakai conjecture.
		
		MSC(2020).  Primary 14B05; Secondary 32S05.
\end{abstract}

\maketitle

\section{Introduction}

Throughout this paper, $k$ denotes an algebraically closed field of characteristic zero.  Let $A$ be a finitely generated $k$-algebra.  Denote by $\Der_k^q(A)$ the $A$-module of derivations of order at most $q$ and put
\[
 \Der_k(A)=\bigcup_{q\geq 0}\Der_k^q(A).
\]
We usually omit the subscript $k$.

Grothendieck showed that $\Der(A)$ is generated by $\Der^1(A)$ when
$A$ is regular \cite{Grothendieck}.  Nakai conjectured the converse
\cite{Nakai}.  The conjecture first appeared explicitly in the work of Mount
and Villamayor \cite{Mount}.

\begin{Nakai}
Let $A$ be a finitely generated algebra over a field $k$ of characteristic zero.  If the higher order derivations of $A$ are generated by first order derivations, then $A$ is regular.
\end{Nakai}

For $q\geq1$, let $\operatorname{der}^q(A)$ be the $A$-submodule of $\Der^q(A)$ generated by operators of the form
\[
 \delta_1\delta_2\cdots\delta_s,
 \qquad 1\leq s\leq q,
 \qquad \delta_i\in\Der^1(A).
\]
For hypersurface rings, Singh proposed the following second order criterion \cite{Singh}: if
\[
 A=k[x_1,\ldots,x_n]/(f)
 \quad\text{and}\quad
 \operatorname{der}^2(A)=\Der^2(A),
\]
then $A$ is regular.

An interesting result proved by Becker \cite{Becker} and Rego \cite{Rego} says that Nakai Conjecture implies the well-known long-standing conjecture of Zariski-Lipman, which is still open in the general case and which asserts that if the module of $k$-derivations of $A$ is $A$-projective
then $A$ is regular.

The Nakai Conjecture is known for several special classes, including
irreducible curves \cite{Mount}, monomial ideals \cite{Sch}, plane
hypersurfaces \cite{Singh}, and homogeneous Brieskorn hypersurfaces
\cite{Brumatti}.  {\color{black}
Xiao--Yau--Zuo verified the Nakai
Conjecture  for the weighted homogeneous Brieskorn case in
\cite{Xiao}, while Xiao--Yau--Zhu--Zuo verified it
 for fewnomial and cusp singularities in \cite{Xiao2}.}
The purpose of the present paper is to treat an arbitrary isolated weighted
homogeneous hypersurface singularity, without imposing any restriction on the
number or the form of its monomials.

The principal difficulty is to pass from Singh's exact sequence to an
effective obstruction.  The exact sequence identifies second order
derivations, modulo first order derivations, with certain $n$-tuples
$(d_1,\ldots,d_n)$ of first order derivations satisfying
\[
 d_i(y_j)=d_j(y_i).
\]
It therefore supplies an abstract way to construct a second order derivation:
one may first construct such a symmetric tuple and then lift it.  Exactness
alone, however, does not determine whether the lift belongs to
$\operatorname{der}^2(A)$, the submodule generated by products of first order
derivations.  We overcome this problem by deriving a coordinatewise necessary
condition for membership in $\operatorname{der}^2(A)$.  If
$\theta_2(D)=(d_1,\ldots,d_n)$ and
$D\in\operatorname{der}^2(A)$, then
\[
 d_i(y_i)\in I_i^2,
 \qquad
 I_i=\{\delta(y_i):\delta\in\Der^1(A)\}.
\]
Thus the problem is reduced to constructing a symmetric tuple for which one
of these containments fails.

The Hessian provides the desired candidate, but initially only modulo the
Jacobian ideal.  More precisely, an adjugate construction produces a tuple
whose skew-symmetry defects
$d_i(y_j)-d_j(y_i)$ belong to the Jacobian ideal.  An explicit correction by
Hamiltonian derivations removes these defects without destroying the
coordinatewise obstruction.  The remaining non-containment is then detected
by the Jacobian determinant of a zero-dimensional complete intersection and
Saito's theorem.  This passage from Singh's abstract exact sequence to a
concrete tuple outside $\operatorname{der}^2(A)$ is the central part of the
proof.

A second difficulty is specific to the weighted homogeneous setting.  Weighted
homogeneity gives a diagonal Euler field only in a distinguished coordinate
system, whereas the polar argument needed for the final non-containment
requires a general linear form.  A general linear change of coordinates does
not preserve the diagonal weighted structure.  We resolve this incompatibility
by transporting the Euler field together with the defining polynomial.  The
transformed Euler field is no longer diagonal, but it is still linear and
continues to satisfy the Euler identity.

More precisely, suppose that $f$ is a weighted homogeneous polynomial of weight type
\[
 (w_1,\ldots,w_n;d).
\]
{\color{black}This means that every monomial $\mathbf{x^{\alpha}}$ occurring in $f$ satisfies
$\sum_{i=1}^n w_i\alpha_i=d$, where the weights $w_i$ and the weighted
degree $d$ are positive integers.  Equivalently,
$\sum_iw_ix_i(\partial f/\partial x_i)=d f$.}
Let
\[
 {\color{black}
 W=\operatorname{diag}(w_1,\ldots,w_n),
 \qquad \mathbf y=M\mathbf x,
 \qquad g(\mathbf y)=f(M^{-1}\mathbf y)}.
\]
The weighted Euler field becomes
\[
 {\color{black}E=(B\mathbf y)^T\nabla_{\mathbf y}},
 \qquad B=MWM^{-1},
\]
and still satisfies $E(g)=d\,g$.  {\color{black}If $H=\Hess(g)$ and
$\mathbf L=B\mathbf y$, differentiating the Euler identity gives}
\[
 {\color{black}H\mathbf L=(dI-B^T)\nabla_{\mathbf y}g.}
\]
This is the covariant form of the weighted Hessian identity.

The main result is the following.

\begin{Main}\label{thm:intro-main}
Let $P=k[x_1,\ldots,x_n]$ and let $f\in P$ be a weighted homogeneous polynomial defining an isolated hypersurface singularity at the origin.  If $A=P/(f)$ is singular, then
\[
 \operatorname{der}^2(A)\neq\Der^2(A).
\]
Consequently, the Singh Conjecture, and hence the Nakai Conjecture, holds for isolated weighted homogeneous hypersurface singularities.
\end{Main}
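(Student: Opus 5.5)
We follow the strategy sketched in the introduction.

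\textbf{Step 1: Singh's exact sequence and a membership test.} Write $y_1,\dots,y_n$ for the images of the coordinates in $A$. Singh's exact sequence identifies $\Der^2(A)/\Der^1(A)$ with the module of tuples $(d_1,\ldots,d_n)$ of first order derivations satisfying the symmetry condition $d_i(y_j)=d_j(y_i)$. The map $\theta_2$ sends $D$ to the tuple $d_i=[D,y_i]$.

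We then prove a necessary condition for lying in $\operatorname{der}^2(A)$. For a product $\delta\delta'$ one has $[\delta\delta',y_i]=\delta(y_i)\delta'+\delta'(y_i)\delta$. Evaluating at $y_i$ gives $2\delta(y_i)\delta'(y_i)\in I_i^2$. Products of three or more derivations do not contribute, since the statement concerns $\operatorname{der}^2$. First order terms are killed by $\theta_2$. By $A$-linearity we conclude:
\[
D\in\operatorname{der}^2(A)\ \Longrightarrow\ d_i(y_i)\in I_i^2 \quad\text{for all } i.
\]
It therefore suffices to produce a symmetric tuple with $d_1(y_1)\notin I_1^2$ and then lift it to some $D\in\Der^2(A)$.

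\textbf{Step 2: Coordinates and the Euler field.} Since $f$ defines an isolated singularity, $\Der^1(A)$ is generated by two kinds of fields:
\begin{itemize}
\item the Euler field $E=(B\mathbf y)^T\nabla$,
\item the Hamiltonian fields $\partial_jg\,\partial_i-\partial_ig\,\partial_j$.
\end{itemize}
This is the standard description for quasihomogeneous isolated singularities; it follows from the depth of the Koszul complex on the partials. Hence
\[
I_1=(L_1)+(\partial_2g,\ldots,\partial_ng),
\]
where $L_1=(B\mathbf y)_1$.

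We choose $M$ general, so that $y_1$ is a generic linear form. Then $\partial_2g,\dots,\partial_ng$ cut out the polar curve $\Gamma$ of $g$ relative to $y_1$. This curve is reduced and does not lie in the hyperplane $\{L_1=0\}$, again by genericity.

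\textbf{Step 3: The Hessian tuple.} Let $C=\adj(H)$ be the adjugate of the Hessian. Define
\[
d_i=\sum_j C_{ij}\,\partial_j .
\]
Then $d_i(y_j)=C_{ij}$. Since $H$ is symmetric, $C$ is symmetric, so this tuple is symmetric on the nose. The difficulty is that $d_i$ need not be a derivation of $A$: we need $d_i(g)=\sum_j C_{ij}\partial_jg\in(g)$.

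The covariant identity $H\mathbf L=(dI-B^T)\nabla g$ shows that
\[
\det(H)\,\mathbf L=C\,(dI-B^T)\nabla g .
\]
This suggests replacing $C$ by the twisted adjugate $\widetilde C=C(dI-B^T)^{-1}$, since $dI-B^T$ is invertible (the weights satisfy $w_i<d$ when the singularity is nontrivial). With this choice $\widetilde C\nabla g=\det(H)(\dots)$, which must be rewritten as an Euler combination.

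The honest route follows the introduction. Set
\[
d_i=\frac{1}{d}\,(\text{$i$-th row of }\widetilde C)\cdot E\text{-type combination},
\]
i.e.\ a combination of $E$ and the Hamiltonian fields chosen so that the $d_i$ are genuine derivations. The resulting skew defects $d_i(y_j)-d_j(y_i)$ then lie in the Jacobian ideal. We remove them by subtracting explicit Hamiltonian derivations $\sum_{k} a_{ijk}\,(\partial_kg\,\partial_j-\partial_jg\,\partial_k)$ whose coefficients are read off from the defects.

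\textbf{Step 4: The obstruction (expected main obstacle).} The tuple constructed in Step 3 has
\[
d_1(y_1)\equiv \cof_{11}(H)\cdot(\text{unit}) \pmod{I_1^2 \text{ corrections}} .
\]
We must show that this element does not lie in $I_1^2$. Passing to the zero-dimensional complete intersection
\[
R=k[\mathbf y]/(g,\,L_1,\,\partial_2g,\ldots,\partial_ng)
\]
(or, more cleanly, restricting to the hyperplane section $y_1=0$, where $L_1$ still behaves as an Euler coordinate), the cofactor $\cof_{11}(H)$ becomes the Jacobian determinant of $(\partial_2g,\ldots,\partial_ng)$ on the section. Saito's theorem says that a Jacobian determinant does not lie in the ideal generated by the functions, and a fortiori not in its square. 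Genericity of the hyperplane ensures the isolated complete intersection hypothesis.

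The delicate points, which we would check first, are:
\begin{itemize}
\item that the Hamiltonian corrections change $d_1(y_1)$ only by elements of $I_1^2$;
\item that, after restricting to the section, $I_1^2$ lands inside the ideal to which Saito's non-containment applies.
\end{itemize}
If a direct comparison fails, we would pass to the polar curve $\Gamma$, normalized branchwise, and compare orders of vanishing. This works because $\cof_{11}(H)$ restricted to $\Gamma$ vanishes to strictly lower order than the generators of $I_1^2$.
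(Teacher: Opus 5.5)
Your Step 1 is the paper's necessary condition (Theorem~\ref{thm:necessary}), up to normalization: $\theta_2(D)_i=[D,y_i]-D(y_i)$, and first order operators are killed only with that correction. The genuine gap is Step 3, which is where the proof actually happens: no tuple is ever defined. The fields $\sum_j\adj(H)_{ij}\partial_j$ are not derivations of $A$. Twisting the adjugate by a constant matrix does not help either. It destroys the symmetry, and $H\mathbf L=(dI-B^T)\nabla_{\mathbf y}g$ only gives $\adj(H)(dI-B^T)\nabla_{\mathbf y}g=\det(H)\,\mathbf L$, which has no reason to lie in $(g)$. The phrase ``an $E$-type combination chosen so that the $d_i$ are derivations with defects in the Jacobian ideal'' just restates what must be constructed. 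The missing idea is to make every $d_i$ a multiple of the Euler field: $d_i=a_iE$ with $\mathbf a=\adj(H)\mathbf c$ and $\mathbf c=\nabla_{\mathbf y}L_1$. Then each $d_i$ lies in $\Der^1(A)$ for free, and the defects are $a_iL_j-a_jL_i$. A two-row Laplace (double-cofactor) expansion together with $H\mathbf L\in J(g)^{\oplus n}$ puts these defects in $J(g)$ (Lemma~\ref{lem:adjugate-wedge}). Only then can the explicit Hamiltonian correction of Lemma~\ref{lem:correction} be applied.

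Step 4 then rests on claims that are false or unproved.

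\emph{Wrong diagonal value.} For this tuple $d_1(y_1)=L_1a_1$ with $a_1=\sum_j(\partial L_1/\partial y_j)\cof_{1j}(H)$. Your value $\cof_{11}(H)$ comes from the abandoned non-derivation tuple. It is precisely the factor $L_1$ and the choice $\mathbf c=\nabla_{\mathbf y}L_1$ that make $2L_1a_1$ the Jacobian determinant of $(L_1^2,g_2,\ldots,g_n)$.

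\emph{Corrections and the section.} Your first delicate point is unjustified: the Hamiltonian correction moves $d_1(y_1)$ by an element of $(g_2,\ldots,g_n)$ with no reason to lie in $I_1^2$. That is why the paper tests against the coarser ideal $(L_1^2,g_2,\ldots,g_n)\supseteq I_1^2$. Restricting to $\{y_1=0\}$ does not work. For general $y_1$ the form $L_1=E(y_1)$ is not proportional to $y_1$ unless all weights coincide, so $L_1^2$ survives on the section and Saito's theorem for $g|_{y_1=0}$ says nothing about it. The paper instead applies Saito in all $n$ variables, after proving that $(L_1,g_2,\ldots,g_n)$ is $\m$-primary (Lemma~\ref{lem:polar-primary}). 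That is not a plain genericity statement, since $L_1$ moves with $y_1$. On any curve in $V(L_1,g_2,\ldots,g_n)$ the Euler identity forces $g=0$, hence $g_1\,dy_1=0$, which pushes the curve into $\{y_1=0\}$.

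\emph{Working modulo $g$.} The membership to be refuted lives in $A$, i.e.\ modulo $g$. Non-containment in the regular local ring transfers to $A$ only if $g\in(L_1^2,g_2,\ldots,g_n)$, equivalently $g_1\in(L_1,g_2,\ldots,g_n)$. For general $y_1$ this can fail. Take $f=x^3+y^3+z^3$ with $y_1=x+y+z$, $y_2=y$, $y_3=z$. Then $B=I$, $L_1=y_1$, $a_1=\cof_{11}(H)$, and $f|_{y_1=0}=-3yz(y+z)$ is isolated. One checks
\[
L_1a_1=18L_1^3+(18y_2-6L_1)g_2+(18y_3-6L_1)g_3-54g,
\]
so $L_1a_1\in(L_1^2,g_2,g_3)A$ even though \eqref{eq:main-noncontainment} holds in the local ring of $P$. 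The paper's closing comparison with \eqref{eq:main-noncontainment} also passes over this point, so it needs a separate argument. Your fallback via orders of vanishing along $\Gamma$ does not supply one.
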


The proof is organized according to these two difficulties.
Section~\ref{sec:singh} recalls
Singh's exact sequence.  Section~\ref{sec:necessary} extracts from it the
coordinatewise necessary condition for a lifted tuple to come from
$\operatorname{der}^2(A)$.  Section~\ref{sec:covariant} first transports the
weighted Euler field through a general linear coordinate change and derives
the covariant Hessian identity.  It then constructs the candidate tuple by an
adjugate wedge argument and makes the tuple genuinely symmetric by an explicit
Hamiltonian correction.  Section~\ref{sec:polar} proves that the relevant
polar ideal is $\m$-primary.  Saito's theorem finally shows that the corrected
tuple violates the necessary condition, and its lift through Singh's exact
sequence gives the required second order derivation.

\begin{ack}
H. Zuo was supported by BJNSF 1252009 and NSFC 12271280.
\end{ack}

\section{Higher order derivations and Singh's exact sequence}\label{sec:singh}

\subsection{Higher order differential operators}

Let $R$ be a $k$-algebra and let $F$ be an $R$-module.  Put $\Diff_k^q(R,F)=0$ for $q<0$.  For $q\geq0$, define inductively
\[
 \Diff_k^q(R,F)
 =\{D\in\operatorname{Hom}_k(R,F):[D,a]\in\Diff_k^{q-1}(R,F)
 \text{ for every }a\in R\}.
\]
An element of $\Diff_k^q(R,F)$ is a differential operator of order at most $q$.  A higher order derivation is a differential operator $D$ satisfying $D(1)=0$; we write
\[
 \Der_k^q(R,F)=\{D\in\Diff_k^q(R,F):D(1)=0\}.
\]
{\color{black}Set
\[
 \Diff_k(R,F)=\bigcup_{q\geq0}\Diff_k^q(R,F),
 \qquad
 \Der_k(R,F)=\bigcup_{q\geq0}\Der_k^q(R,F).
\]
When $R=F$, we simply write $\Diff_k^q(R)$, $\Der_k^q(R)$,
$\Diff_k(R)$, and $\Der_k(R)$.  As before, the subscript $k$ is omitted
when no confusion can arise.  Thus $\Diff(P)=\Diff_k(P,P)$ and
$\Diff(P,A)=\Diff_k(P,A)$.}

Let $P=k[x_1,\ldots,x_n]$.  For a multi-index $\alpha\in\mathbb N^n$, put
\[
 \partial_\alpha=\frac{1}{\alpha!}
 \frac{\partial^{|\alpha|}}{\partial x^\alpha}.
\]
Every differential operator $D\in\Diff(P)$ has a unique expression
\[
 D=\sum_\alpha c_\alpha(D)\partial_\alpha.
\]

We recall the standard presentation of differential operators on a quotient.

\begin{theorem}[{\color{black}Singh \cite{Singh}}]\label{thm:quotient-presentation}
Let $I$ be a proper ideal of $P$ and let $A=P/I$.  Then
\[
 \Der^q(A)\cong
 \frac{\{D\in\Der^q(P):D(I)\subseteq I\}}
 {I\Der^q(P)}.
\]
Equivalently, $\Der^q(A)$ may be identified with the operators in $\Der^q(P,A)$ which vanish on $I$.
\end{theorem}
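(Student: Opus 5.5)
We identify both sides with the same explicit set of operators, $\mathcal D=\{D\in\Der^q(P):D(I)\subseteq I\}$, and show that each side is $\mathcal D$ modulo $I\Der^q(P)$.

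\emph{Restriction map.} Let $\pi:P\to A$ be the quotient map. Send $D\in\mathcal D$ to the induced map $\bar D:A\to A$, $\bar D(\pi(a))=\pi(D(a))$. This is well defined precisely because $D(I)\subseteq I$. We check that $\bar D\in\Der^q(A)$ by induction on $q$, using
\[
 [\bar D,\pi(a)]=\overline{[D,a]}
\]
together with the fact that $[D,a]$ again preserves $I$. It is clear that $\bar D(1)=0$. If $D\in I\Der^q(P)$, then $D(P)\subseteq I$, so $\bar D=0$. Hence restriction gives an $A$-linear map
\[
 \mathcal D/I\Der^q(P)\longrightarrow\Der^q(A).
\]

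\emph{Injectivity.} Suppose $\bar D=0$, i.e.\ $D(P)\subseteq I$. Write $D=\sum_\alpha c_\alpha\partial_\alpha$. We show by induction on $|\alpha|$ that every coefficient $c_\alpha$ lies in $I$. Evaluating at monomials gives
\[
 D(x^\beta)=\sum_{\alpha\le\beta}\binom{\beta}{\alpha}c_\alpha x^{\beta-\alpha}.
\]
Apply this with $\beta$ of minimal order among the remaining coefficients. The lower-order coefficients already lie in $I$, so $c_\beta\in I$. Thus $D\in I\Der^q(P)$.

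\emph{Surjectivity.} This is the main step. Composition with $\pi$ identifies $\Der^q(A)$ with the operators $E\in\Der^q(P,A)$ that vanish on $I$. Given such an $E$, we must lift it to some $D\in\Der^q(P)$ with $\pi\circ D=E$. For $P$ a polynomial ring, every $E\in\Diff^q(P,A)$ has a unique expansion
\[
 E=\sum_{|\alpha|\le q}e_\alpha\partial_\alpha,\qquad e_\alpha\in A.
\]
The coefficients are determined recursively by the values $E(x^\beta)$ via the same triangular formula, and the operator so defined agrees with $E$. The key input here is that $\Diff^q(P,F)=\operatorname{Hom}_P(P^{(q)},F)$, and that the module of principal parts $P^{(q)}$ is free on the classes of $(1\otimes x-x\otimes1)^\alpha$ with $|\alpha|\le q$.

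Now lift each $e_\alpha$ to some $c_\alpha\in P$, and set $D=\sum c_\alpha\partial_\alpha$. Then $\pi\circ D=E$. Since $E$ vanishes on $I$, we get $D(I)\subseteq I$, so $D\in\mathcal D$. Finally, $D(1)=c_0$ and $\pi(c_0)=E(1)=0$, so $c_0\in I$. Replacing $c_0$ by $0$ changes $D$ only by an element of $I\Der^q(P)$, and the new $D$ lies in $\Der^q(P)$. This gives surjectivity.

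The equivalent formulation in the statement is exactly this identification of $\Der^q(A)$ with the operators $E\in\Der^q(P,A)$ vanishing on $I$. The only nontrivial ingredient is the freeness of $P^{(q)}$ for polynomial rings, which is standard.
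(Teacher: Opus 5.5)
The paper does not prove this statement. It quotes it from Singh and uses it as a black box, so there is no internal argument to compare against. Your proof is correct and follows the standard route for this classical fact:
\begin{itemize}
\item restricting $I$-preserving operators to $A$ gives the map;
\item the triangular formula for $D(x^\beta)$ gives injectivity;
\item lifting the coefficients of $E=\sum_{|\alpha|\le q}e_\alpha\partial_\alpha$ gives surjectivity.
\end{itemize}
The ``equivalently'' clause is exactly the intermediate identification you pass through in the surjectivity step.

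A few points of precision, none of which is a gap:
\begin{itemize}
\item The induction showing that $\bar D$ has order at most $q$ must run in $\Diff^q$, not $\Der^q$, because $[D,a](1)=D(a)$ is generally nonzero.
\item You state the identification of $\Der^q(A)$ with the operators in $\Der^q(P,A)$ vanishing on $I$ without argument. Both directions follow from the same kind of commutator computation: $[\delta\circ\pi,a]=[\delta,\pi(a)]\circ\pi$ in one direction, and in the other, $[E,a]$ again vanishes on $I$ whenever $E$ does, since $aI\subseteq I$.
\item When you discard $c_0$, you change $D$ by the multiplication operator $c_0\in I\Diff^0(P)$, not by an element of $I\Der^q(P)$. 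This is harmless, because all that matters is that $\pi\circ D$ is unchanged, which holds since $\pi(c_0)=0$.
\item You can avoid principal parts if you prefer. The operator $E-\sum_{|\alpha|\le q}e_\alpha\partial_\alpha$ has order at most $q$ and kills every monomial of degree at most $q$. An induction on $q$, using the commutators $[\,\cdot\,,x_i]$, shows that any such operator is zero.
\end{itemize}
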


\subsection{The second order exact sequence}

For $D\in\Diff(P,A)$ and a multi-index $\beta$, define the shifted operator
\[
 \langle D,x^\beta\rangle
 =\sum_\alpha c_{\alpha+\beta}(D)\partial_\alpha.
\]
{\color{black}Let $e_i=(0,\ldots,0,1,0,\ldots,0)\in\mathbb N^n$ denote the $i$th
standard multi-index.  For a second order operator
$D\in\Diff^2(P,A)$, put}
\[
 \theta_2(D)=(d_1,\ldots,d_n),
 \qquad
 d_i=\langle D,x_i\rangle-c_{e_i}(D).
\]
Define
\[
 \mathscr D^2(A)=
 \{(d_1,\ldots,d_n)\in\Der^1(A)^{\oplus n}:
 d_i(x_j)=d_j(x_i)\text{ for all }i,j\}.
\]

\begin{theorem}[{\color{black}Singh \cite{Singh}}]\label{thm:singh-exact}
Suppose that $A=P/I$ and that $I$ is principal.  Then
\[
 0\longrightarrow\Diff^1(A)
 \longrightarrow\Diff^2(A)
 \xrightarrow{\theta_2}\mathscr D^2(A)
 \longrightarrow0
\]
is exact.  Restricting to operators which vanish at $1$ gives the corresponding statement for higher order derivations.
\end{theorem}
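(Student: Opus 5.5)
The plan is to work directly from the definitions and identify $\Diff^2(A)$ with the operators $D\in\Diff^2(P,A)$ that vanish on $I=(f)$, as Theorem~\ref{thm:quotient-presentation} allows. Write
\[
D=c_0+\sum_i c_{e_i}\partial_{e_i}+\sum_{|\alpha|=2}c_\alpha\partial_\alpha,\qquad c_\alpha\in A.
\]
The shifted operator $\langle D,x_i\rangle$ is a first order operator. Its definition is rigged so that $\langle D,x_i\rangle = [D,x_i] + (\text{lower terms})$. Concretely, $[D,x_i]=\sum_\alpha c_{\alpha+e_i}\partial_\alpha$, using the identity $[\partial_\alpha,x_i]=\partial_{\alpha-e_i}$ valid for divided-power operators. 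Hence $d_i=[D,x_i]-[D,x_i](1)$. Since $D$ vanishes on $I$, so does $[D,x_i]$: indeed $[D,x_i](fg)=D(x_ifg)-x_iD(fg)=0$. The constant $[D,x_i](1)$ is a zero-order operator, and since $1\cdot f=f\in I$ it also vanishes on $I$. So $d_i$ is a derivation $P\to A$ killing $I$, i.e.\ $d_i\in\Der^1(A)$. Symmetry follows because $d_i(x_j)=c_{e_i+e_j}$ for $i\neq j$, and more precisely $[[D,x_i],x_j]=[[D,x_j],x_i]$. This shows $\theta_2$ lands in $\mathscr D^2(A)$.

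Exactness on the left and in the middle are the next checks. The map $\Diff^1(A)\to\Diff^2(A)$ is the inclusion. If $\theta_2(D)=0$, then all $c_\alpha$ with $|\alpha|=2$ vanish, because each such $\alpha$ equals $e_i+\beta$ with $|\beta|=1$ and $c_\alpha$ appears as a coefficient of $d_i$. Hence $D$ has order $\le1$. Conversely, first order operators have $\langle D,x_i\rangle=c_{e_i}$ constant, so $\theta_2(D)=0$.

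Surjectivity is the main obstacle, and it is where principality of $I$ enters. Given a symmetric tuple $(d_1,\dots,d_n)$, symmetry lets us define $c_{e_i+e_j}:=d_i(x_j)$ consistently. We then set $D_0=\sum_{|\alpha|=2}c_\alpha\partial_\alpha$, an element of $\Der^2(P,A)$ with $[D_0,x_i]=d_i$ up to a constant. The trouble is that $D_0$ need not vanish on $(f)$. The plan is to correct it by a first order operator $\delta\colon P\to A$ so that $D=D_0+\delta$ kills $fP$.

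To find $\delta$, compute for $g\in P$, using $[D_0,f]$ and the Leibniz-type expansion:
\[
D_0(fg)=D_0(f)g+\sum_i d_i(g)\,\partial_if + f(\cdots),
\]
which in $A$ equals $D_0(f)g+\sum_i \partial_if\, d_i(g)$. Since each $d_i$ kills $f$, the middle term is a first order operator in $g$ depending only on $g$ modulo derivatives. We therefore need $\delta(fg)=-D_0(fg)$ for all $g$. A first order $\delta=\sum a_i\partial_i+a_0$ gives $\delta(fg)=g\sum a_i\partial_if$ in $A$. So we must show that $g\mapsto D_0(f)g+\sum_i\partial_if\,d_i(g)$ is multiplication by some element of $J=(\partial_1 f,\ldots,\partial_n f)A$, i.e.\ that the expression is $A$-linear in $g$ with value in $J$.

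Its derivation part $\sum_i\partial_if\,d_i$ must be checked against $D_0$. Symmetry gives $\sum_i\partial_if\,d_i(x_j)=\sum_i\partial_if\,d_j(x_i)=d_j(f)=0$. So the derivation part vanishes on generators and hence is zero. What remains is showing $D_0(f)\in J$. For this I would apply $[D_0,x_j]$ to $f$: $d_j(f)=0$ gives $[D_0,x_j](f)\in k\cdot\partial_jf$, i.e.\ it lies in $J$. Combining this with $D_0(x_jf)=x_jD_0(f)+[D_0,x_j](f)$ and a suitable Euler-type or Taylor identity should yield $D_0(f)\in J$. I expect this last step to need care and would follow Singh's argument at this point. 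Once it is done, $\delta$ exists, $D$ vanishes on $I$ with $\theta_2(D)=(d_i)$, and the restriction to operators with $D(1)=0$ follows by subtracting the constant $D(1)$.
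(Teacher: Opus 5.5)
Your first two steps are correct. In fact $\langle D,x_i\rangle=[D,x_i]$ exactly, so $d_i=[D,x_i]-c_{e_i}$. Your reduction of surjectivity is also correct. With $c_{ij}=d_i(x_j)$ and $D_0=\sum_{|\alpha|=2}c_\alpha\partial_\alpha$, symmetry kills the derivation part, so $D_0(fg)=D_0(f)\,g$ in $A$. A first order correction therefore exists if and only if
\[
D_0(f)=\tfrac12\sum_{i,j}c_{ij}\,\partial_i\partial_j f\in(\partial_1f,\ldots,\partial_nf)A .
\]
This membership is the whole content of surjectivity, and you do not prove it. The route you sketch cannot supply it:
\begin{itemize}
\item $[D_0,x_j]=d_j$ on the nose, with no constant, because $D_0$ has no first order part. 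Hence $[D_0,x_j](f)=d_j(f)=0$ in $A$. The identity $D_0(x_jf)=x_jD_0(f)+d_j(f)$ is then a tautology and says nothing about $D_0(f)$ modulo the Jacobian ideal.
\item No Euler identity is available, because the theorem concerns an arbitrary principal ideal $(f)$, and $f$ need not lie in its Jacobian ideal.
\end{itemize}
Deferring to ``Singh's argument'' leaves exactly the key step open.

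The missing idea is to differentiate the relations $d_i(f)=0$ and take a divergence. Lift the $c_{ij}$ to a symmetric matrix $(\tilde c_{ij})$ over $P$, and write $\sum_j\tilde c_{ij}\,\partial_jf=v_if$ with $v_i\in P$. Applying $\partial_i$ and summing over $i$ gives
\[
\sum_{i,j}\tilde c_{ij}\,\partial_i\partial_jf
=\sum_j\Bigl(v_j-\sum_i\partial_i\tilde c_{ij}\Bigr)\partial_jf
+f\sum_i\partial_iv_i .
\]
So $D_0(f)\in(f,\partial_1f,\ldots,\partial_nf)$. Now put
\[
\delta=\tfrac12\sum_j\Bigl(\sum_i\partial_i\tilde c_{ij}-v_j\Bigr)\partial_j .
\]
Then $D=D_0+\delta$ satisfies $D(fg)=\bigl(D_0(f)+\delta(f)\bigr)g=0$ in $A$, with $\theta_2(D)=(d_1,\ldots,d_n)$ and $D(1)=0$.

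This is precisely where principality enters. With several generators $f_k$, you would need a single $\delta$ with $\delta(f_k)=-D_0(f_k)$ for all $k$ at once, and the divergence trick only produces one correction per generator.

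The paper itself gives no proof of this statement and simply cites Singh, so the proposal cannot be checked against an argument in the text.
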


If $D_1=\sum_i\alpha_i\partial_{x_i}$ and
$D_2=\sum_i\beta_i\partial_{x_i}$ are first order derivations, a direct calculation gives
\begin{equation}\label{eq:theta-product}
 \theta_2(D_1D_2)_i
 =2\alpha_i\beta_i\partial_{x_i}
 +\sum_{j\neq i}(\alpha_i\beta_j+\alpha_j\beta_i)\partial_{x_j}.
\end{equation}
In particular,
\begin{equation}\label{eq:theta-product-diagonal}
 \theta_2(D_1D_2)_i(x_i)=2\alpha_i\beta_i.
\end{equation}

\section{Linear Euler fields and a necessary condition}\label{sec:necessary}

Let $g\in P=k[y_1,\ldots,y_n]$ define an isolated hypersurface singularity at the origin.  Assume that there exists a linear vector field
\begin{equation}\label{eq:linear-euler}
 E=\sum_{i=1}^nL_i\partial_{y_i},
 \qquad {\color{black}\mathbf L=B\mathbf y=(L_1,\ldots,L_n)^T},
 \qquad E(g)=d\,g,
\end{equation}
{\color{black}where $\mathbf y=(y_1,\ldots,y_n)^T$, so that $L_i$ is the $i$th
component of the coefficient vector $\mathbf L$.  Here
$B\in\operatorname{GL}_n(k)$ is a constant matrix and $d\in k^*$.}  Put
$g_i=\partial g/\partial y_i$ and
\[
 D_{ij}=g_i\partial_{y_j}-g_j\partial_{y_i}.
\]

\begin{proposition}[Kantor \cite{Kantor}]\label{prop:der-generators}
Let $A=P/(g)$.  Then $\Der^1(A)$ is generated by $E$ and the Hamiltonian derivations $D_{ij}$.
\end{proposition}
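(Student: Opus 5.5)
The plan is to use the presentation of Theorem~\ref{thm:quotient-presentation} for $q=1$, reduce the question to a syzygy problem for the partial derivatives $g_1,\ldots,g_n$, and solve that syzygy problem with the Koszul complex. By Theorem~\ref{thm:quotient-presentation}, an element of $\Der^1(A)$ is represented by a first order derivation $D=\sum_i a_i\partial_{y_i}$ of $P$ with $D(g)\in(g)$, modulo $g\Der^1(P)$. So it suffices to show the following: every $D\in\Der^1(P)$ with $D(g)=hg$ for some $h\in P$ is a $P$-linear combination of $E$ and the $D_{ij}$.

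\emph{Step 1 (subtract the Euler part).} Since $d\in k^*$ and $E(g)=d\,g$, I put $D'=D-\tfrac{h}{d}E$. Then $D'(g)=hg-hg=0$. Writing $D'=\sum_i b_i\partial_{y_i}$, this says that $\sum_i b_ig_i=0$, so $(b_1,\ldots,b_n)$ is a syzygy of $(g_1,\ldots,g_n)$.

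\emph{Step 2 (the partials form a regular sequence).} I will show that $g_1,\ldots,g_n$ is a regular sequence in $P$.
\begin{itemize}
\item The Euler identity gives $d\,g=\sum_iL_ig_i\in(g_1,\ldots,g_n)$. Hence the common zero locus of $g_1,\ldots,g_n$ in $k^n$ equals the singular locus of $V(g)$.
\item In the intended application, $g(\mathbf y)=f(M^{-1}\mathbf y)$ with $f$ weighted homogeneous of positive weights. The singular locus of $V(f)$ is then stable under the $k^*$-action $t\cdot x=(t^{w_1}x_1,\ldots,t^{w_n}x_n)$, so it is a union of orbit closures through the origin.
\item The origin is an isolated singular point. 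So this conical set is $\{0\}$ globally, not just near $0$, and the same holds for $g$ after the linear change.
\end{itemize}
Thus $(g_1,\ldots,g_n)$ has height $n$ in the Cohen--Macaulay ring $P$, and so $g_1,\ldots,g_n$ is a regular sequence. I would record this deduction of the global statement from the local hypothesis explicitly.

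\emph{Step 3 (Koszul).} For a regular sequence the Koszul complex is exact. Hence the module of syzygies of $(g_1,\ldots,g_n)$ is generated by the Koszul relations $g_je_i-g_ie_j$. Therefore $(b_i)=\sum_{i<j}c_{ij}(g_je_i-g_ie_j)$ for suitable $c_{ij}\in P$. In terms of derivations,
\[
D'=\sum_{i<j}c_{ij}\,(g_j\partial_{y_i}-g_i\partial_{y_j})=-\sum_{i<j}c_{ij}D_{ij}.
\]
It follows that $D=\tfrac hdE-\sum_{i<j}c_{ij}D_{ij}$. Passing to $A$ proves the proposition.

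The only step that is not formal is Step~2. It needs the regular sequence property in the global polynomial ring, whereas the hypothesis ``isolated singularity at the origin'' is local. The conical structure coming from the (transported) weighted homogeneity is what bridges this gap. Alternatively, one could run the argument in the graded ring with respect to the transported grading given by $E$ (which is diagonalizable, since $B=MWM^{-1}$), where local and global regularity coincide.
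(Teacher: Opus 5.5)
Your proof follows the paper's: subtract $\tfrac hd E$, then use that the syzygies of $g_1,\ldots,g_n$ are generated by the Koszul relations; your conical argument that $V(g_1,\ldots,g_n)=\{0\}$ globally supplies the local-to-global step that the paper's one-line appeal to the isolated singularity leaves implicit. One small correction: in the non-local ring $P$, height $n$ of $(g_1,\ldots,g_n)$ does not make $g_1,\ldots,g_n$ a regular sequence in the given order (e.g.\ $(y_1-1)y_2,\,(y_1-1)y_3,\,y_1$ generate $(y_1,y_2,y_3)$ in $k[y_1,y_2,y_3]$ but are not regular in that order); it does give grade $n$, hence vanishing of the first Koszul homology, which is all Step~3 uses (equivalently, localize at the origin, the only maximal ideal containing the Jacobian ideal).
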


\begin{proof}
Let $\delta=\sum_i a_i\partial_{y_i}$ be a derivation of $A$ and lift it to $P$.  Then $\delta(g)=hg$ for some $h\in P$.  Replacing $\delta$ by $\delta-(h/d)E$, we may assume that $\sum_i a_ig_i=0$.  Since $g$ has an isolated singularity, $g_1,\ldots,g_n$ form a regular sequence.  Their first syzygies are therefore generated by the Koszul syzygies, which correspond exactly to the $D_{ij}$.
\end{proof}

For $1\leq i\leq n$, define
\begin{equation}\label{eq:value-ideal}
 I_i=(L_i,g_1,\ldots,g_{i-1},g_{i+1},\ldots,g_n).
\end{equation}
By Proposition~\ref{prop:der-generators},
\begin{equation}\label{eq:value-ideal-characterization}
 I_i=\{\delta(y_i):\delta\in\Der^1(A)\}
\end{equation}
as an ideal of $A$.

\begin{theorem}[Necessary condition]\label{thm:necessary}
Let $D\in\Der^2(A)$ and write
\[
 \theta_2(D)=(d_1,\ldots,d_n).
\]
If $D\in\operatorname{der}^2(A)$, then
\[
 d_i(y_i)\in I_i^2
 \qquad\text{for every }i.
\]
Consequently,
\begin{equation}\label{eq:larger-ideal-necessary}
 d_i(y_i)\in
 (L_i^2,g_1,\ldots,g_{i-1},g_{i+1},\ldots,g_n).
\end{equation}
\end{theorem}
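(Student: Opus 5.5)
Since $D\in\operatorname{der}^2(A)$, we can write
\[
 D=\sum_k a_k\,\delta_k+\sum_l b_l\,\delta'_l\delta''_l,
 \qquad a_k,b_l\in A,\ \delta_k,\delta'_l,\delta''_l\in\Der^1(A).
\]
Here we use that $\operatorname{der}^2(A)$ is the $A$-submodule generated by first order derivations and products of two of them. The map $\theta_2$ is $A$-linear, since the shifted operator $\langle D,x_i\rangle$ and the coefficient $c_{e_i}(D)$ are $A$-linear in $D$. Hence $\theta_2(D)=\sum_k a_k\theta_2(\delta_k)+\sum_l b_l\theta_2(\delta'_l\delta''_l)$, and it suffices to control the $i$th component evaluated at $y_i$ for each kind of generator.

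For a first order operator $\delta=\sum_j\alpha_j\partial_{y_j}$, all coefficients $c_\alpha$ with $|\alpha|=2$ vanish. So $\langle\delta,y_i\rangle=c_{e_i}(\delta)=\alpha_i$, and $\theta_2(\delta)_i=\alpha_i-\alpha_i=0$, consistent with exactness in Theorem~\ref{thm:singh-exact}. These terms therefore contribute nothing.

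For a product $\delta'\delta''$ with $\delta'=\sum\alpha_j\partial_{y_j}$ and $\delta''=\sum\beta_j\partial_{y_j}$, formula \eqref{eq:theta-product-diagonal} gives $\theta_2(\delta'\delta'')_i(y_i)=2\alpha_i\beta_i$. Since $\alpha_i=\delta'(y_i)$ and $\beta_i=\delta''(y_i)$, both lie in $I_i$ by \eqref{eq:value-ideal-characterization}, so the product lies in $I_i^2$. Summing with coefficients in $A$ yields $d_i(y_i)\in I_i^2$.

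The one point requiring care is well-definedness. The $d_i$ are derivations of $A$, and $d_i(y_i)$ is its value in $A$. Theorem~\ref{thm:quotient-presentation} lets us represent $D$ and the $\delta$'s by operators on $P$ (or in $\Der^2(P,A)$) that preserve $(g)$. We must make sure that the identity $D=\sum a_k\delta_k+\sum b_l\delta'_l\delta''_l$, which holds as operators on $A$, gives the same $\theta_2$ regardless of the lift. Formula \eqref{eq:theta-product} is computed for the $P$-lifts, and its composition agrees with the composition in $\Der(A)$ modulo $g\Der(P)$. Since $\theta_2$ takes values in $A$, ambiguity of lifts by multiples of $g$ is killed. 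I expect this bookkeeping to be the only mild obstacle; I would handle it by working throughout in $\Diff(P,A)$, where the coefficients $c_\alpha$ are defined with values in $A$.

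Finally, the consequence follows because $I_i^2$ is generated by $L_i^2$, by the products $L_ig_j$, and by the products $g_jg_k$ with $j,k\neq i$. Every generator other than $L_i^2$ already lies in $(g_j: j\neq i)$. Hence $I_i^2\subseteq(L_i^2,g_1,\ldots,\widehat{g_i},\ldots,g_n)$, which gives \eqref{eq:larger-ideal-necessary}.
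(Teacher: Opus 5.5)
Your proof is correct and is essentially the paper's argument: using the $A$-linearity of $\theta_2$ and its vanishing on $\Der^1(A)$, you reduce to a product $\delta'\delta''$. There \eqref{eq:theta-product-diagonal} gives $2\alpha_i\beta_i$ with $\alpha_i=\delta'(y_i)$ and $\beta_i=\delta''(y_i)$ lying in $I_i$ by \eqref{eq:value-ideal-characterization}, and the consequence follows by expanding the generators of $I_i^2$. Your extra bookkeeping spells out what the paper's one-line reduction leaves implicit: the explicit $A$-linear combinations, and the independence of $\theta_2$ from the choice of lifts to $P$ because the coefficients are read in $A$.
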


\begin{proof}
Since $\theta_2$ vanishes on $\Der^1(A)$, it is enough to consider a product $D=D_1D_2$ of two first order derivations.  Write
\[
 D_1=\sum_j\alpha_j\partial_{y_j},
 \qquad
 D_2=\sum_j\beta_j\partial_{y_j}.
\]
By \eqref{eq:value-ideal-characterization}, $\alpha_i,\beta_i\in I_i$.  Formula \eqref{eq:theta-product-diagonal} gives
\[
 d_i(y_i)=2\alpha_i\beta_i\in I_i^2.
\]
The second assertion follows because every product involving one of the $g_j$, $j\neq i$, already belongs to the ideal on the right side of \eqref{eq:larger-ideal-necessary}.
\end{proof}

\begin{corollary}\label{cor:criterion}
Suppose that $(d_1,\ldots,d_n)\in\Der^1(A)^{\oplus n}$ can be corrected by Hamiltonian derivations to an element $(d_1',\ldots,d_n')\in\mathscr D^2(A)$, and suppose that for some $i$,
\[
 d_i(y_i)\notin
 (L_i^2,g_1,\ldots,g_{i-1},g_{i+1},\ldots,g_n).
\]
Then the second order derivation lifting $(d_1',\ldots,d_n')$ by Theorem~\ref{thm:singh-exact} does not belong to $\operatorname{der}^2(A)$.
\end{corollary}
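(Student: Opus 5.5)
The plan is to track how a Hamiltonian correction changes the diagonal value $d_i(y_i)$, and then argue by contradiction using Theorem~\ref{thm:singh-exact} and Theorem~\ref{thm:necessary}. All ideals below are ideals of $A$, as in \eqref{eq:larger-ideal-necessary}.

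\emph{Step 1: the correction does not change $d_i(y_i)$ modulo $(g_j : j\neq i)$.} By hypothesis, for each $i$ we can write
\[
 d_i'=d_i+\sum_{j<k}c^{(i)}_{jk}D_{jk},\qquad c^{(i)}_{jk}\in A.
\]
From $D_{jk}=g_j\partial_{y_k}-g_k\partial_{y_j}$ we get
\[
 D_{jk}(y_i)=g_j\,\delta_{ik}-g_k\,\delta_{ij}.
\]
This is $g_j$ with $j\neq i$ when $k=i$, it is $-g_k$ with $k\neq i$ when $j=i$, and it is $0$ otherwise. So every correction term $c^{(i)}_{jk}D_{jk}(y_i)$ lies in $(g_1,\ldots,g_{i-1},g_{i+1},\ldots,g_n)$. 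Therefore
\[
 d_i'(y_i)\equiv d_i(y_i)\pmod{(g_1,\ldots,g_{i-1},g_{i+1},\ldots,g_n)}.
\]
This ideal is contained in $J_i:=(L_i^2,g_1,\ldots,g_{i-1},g_{i+1},\ldots,g_n)$. Since $d_i(y_i)\notin J_i$ by assumption, it follows that $d_i'(y_i)\notin J_i$.

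\emph{Step 2: lift the corrected tuple.} Since $(d_1',\ldots,d_n')\in\mathscr D^2(A)$, surjectivity of $\theta_2$ in Theorem~\ref{thm:singh-exact} gives $D\in\Diff^2(A)$ with $\theta_2(D)=(d_1',\ldots,d_n')$. The version of the theorem restricted to operators vanishing at $1$ lets us take $D\in\Der^2(A)$. Equivalently, replace $D$ by $D-D(1)$; this subtracts a zeroth-order operator, which lies in $\Diff^1(A)=\ker\theta_2$, so $\theta_2(D)$ is unchanged. Any two such lifts differ by an element of $\Diff^1(A)$, and on operators vanishing at $1$ this is an element of $\Der^1(A)\subseteq\operatorname{der}^2(A)$. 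Hence membership in $\operatorname{der}^2(A)$ does not depend on the choice of lift.

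\emph{Step 3: conclude.} If $D$ were in $\operatorname{der}^2(A)$, then \eqref{eq:larger-ideal-necessary} in Theorem~\ref{thm:necessary} would give $d_i'(y_i)\in J_i$, contradicting Step~1. Hence $D\notin\operatorname{der}^2(A)$.

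There is no real obstacle here. The only point needing care is Step~1: one must check that every Hamiltonian derivation evaluated at $y_i$ produces only the generators $g_j$ with $j\neq i$, and never $g_i$. This holds because $D_{ii}=0$.
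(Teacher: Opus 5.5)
Your proof is correct and is the paper's argument: the paper gives no separate proof of this corollary, but its ingredients are exactly yours, namely that a Hamiltonian correction changes $d_i(y_i)$ only by elements of $(g_1,\ldots,\widehat{g_i},\ldots,g_n)$ (Lemma~\ref{lem:correction} and \eqref{eq:first-correction}), followed by Theorem~\ref{thm:necessary} applied to $\theta_2(D)=(d_1',\ldots,d_n')$, as in the proof of Theorem~\ref{thm:main}. Your remark that membership in $\operatorname{der}^2(A)$ does not depend on the chosen lift, because two lifts differ by an element of $\Der^1(A)$, is a useful detail that the paper leaves implicit.
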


\section{The covariant Hessian construction}\label{sec:covariant}

\subsection{Transformation of the weighted Euler field}

\leavevmode{\color{black}Let $\mathbf x=(x_1,\ldots,x_n)^T$ and let $f(\mathbf x)$ be weighted
homogeneous of type $(w_1,\ldots,w_n;d)$.}  Put
\[
 {\color{black}
 W=\operatorname{diag}(w_1,\ldots,w_n),
 \qquad
 \nabla_{\mathbf x}
 =\left(\partial_{x_1},\ldots,\partial_{x_n}\right)^T,
 \qquad
 E_{\mathbf x}=(W\mathbf x)^T\nabla_{\mathbf x}.}
\]
Then
\[
 {\color{black}E_{\mathbf x}(f)=df.}
\]
Let $M\in\operatorname{GL}_n(k)$ and set
\[
 {\color{black}
 \mathbf y=M\mathbf x,
 \qquad
 g(\mathbf y)=f(M^{-1}\mathbf y),
 \qquad
 B=MWM^{-1},
 \qquad
 \nabla_{\mathbf y}
 =\left(\partial_{y_1},\ldots,\partial_{y_n}\right)^T.}
\]

\begin{lemma}\label{lem:euler-transform}
{\color{black}In the $\mathbf y$-coordinates, the same Euler field is}
\[
 {\color{black}E_{\mathbf y}=(B\mathbf y)^T\nabla_{\mathbf y}.}
\]
Moreover,
\[
 {\color{black}E_{\mathbf y}(g)=d\,g.}
\]
\end{lemma}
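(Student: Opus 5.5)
The proof is a chain-rule computation: push the vector field $E_{\mathbf x}$ forward along the linear isomorphism $\mathbf y=M\mathbf x$, then use the fact that applying a vector field commutes with change of coordinates. The main identity to set up is the transformation of gradients. Since $f(\mathbf x)=g(M\mathbf x)$, the chain rule gives
\[
 \nabla_{\mathbf x}f=M^T(\nabla_{\mathbf y}g)(M\mathbf x),
\]
and at the level of operators $\nabla_{\mathbf x}=M^T\nabla_{\mathbf y}$. This holds because $\partial_{x_j}=\sum_i M_{ij}\partial_{y_i}$, as $y_i=\sum_j M_{ij}x_j$.

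\textbf{Step 1: rewrite the vector field.} Substituting $\mathbf x=M^{-1}\mathbf y$ gives
\[
 E_{\mathbf x}=(W\mathbf x)^T\nabla_{\mathbf x}
 =(WM^{-1}\mathbf y)^T M^T\nabla_{\mathbf y}
 =(MWM^{-1}\mathbf y)^T\nabla_{\mathbf y}
 =(B\mathbf y)^T\nabla_{\mathbf y}.
\]
Equivalently, the pushforward of a linear vector field $\mathbf x\mapsto W\mathbf x$ under the linear map $M$ is $\mathbf y\mapsto MWM^{-1}\mathbf y$. This proves the first claim, $E_{\mathbf y}=E_{\mathbf x}$ as derivations of $P$.

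\textbf{Step 2: the Euler identity.} Since $E_{\mathbf y}$ and $E_{\mathbf x}$ are the same derivation written in two coordinate systems, and $g\circ M=f$, we get
\[
 E_{\mathbf y}(g)=E_{\mathbf x}(f)=d\,f=d\,g.
\]
For a self-contained check, compute directly: $(B\mathbf y)^T(\nabla_{\mathbf y}g)(\mathbf y)$ evaluated at $\mathbf y=M\mathbf x$ equals $(MW\mathbf x)^T M^{-T}\nabla_{\mathbf x}f=(W\mathbf x)^T\nabla_{\mathbf x}f=d\,f(\mathbf x)=d\,g(M\mathbf x)$. Because $M$ is invertible, this identity for all $\mathbf x$ gives it for all $\mathbf y$.

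There is no real obstacle here. The only point requiring care is keeping transposes consistent: one must check that $\nabla_{\mathbf x}=M^T\nabla_{\mathbf y}$ rather than $M^{-T}\nabla_{\mathbf y}$, since the wrong convention would yield $M^{-T}WM^T$ instead of $B$.

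Finally, $B$ is invertible and $d\in k^*$, as required in \eqref{eq:linear-euler}, because $B$ is conjugate to $W$, whose diagonal entries $w_i$ are positive integers and hence nonzero in characteristic zero.
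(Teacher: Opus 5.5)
Your proof is correct and follows the paper's argument essentially verbatim: the chain rule $\partial_{x_j}=\sum_i M_{ij}\partial_{y_i}$ turns the coefficient vector into $MW\mathbf x=MWM^{-1}\mathbf y=B\mathbf y$, and $\nabla_{\mathbf y}g=(M^{-1})^T\nabla_{\mathbf x}f$ reduces $E_{\mathbf y}(g)$ to $(W\mathbf x)^T\nabla_{\mathbf x}f=d\,f$. Your closing remark is extra but harmless; note only that $d\in k^*$ holds simply because $d$ is a positive integer, not because of the conjugacy $B=MWM^{-1}$.
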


\begin{proof}
By the chain rule,
\[
 \partial_{x_j}=\sum_iM_{ij}\partial_{y_i}.
\]
{\color{black}Hence the coefficient vector of $E_{\mathbf x}$ in the $\mathbf y$-coordinates is}
\[
 {\color{black}MW\mathbf x=MWM^{-1}\mathbf y=B\mathbf y.}
\]
Also,
\[
 {\color{black}\nabla_{\mathbf y}g=(M^{-1})^T\nabla_{\mathbf x}f.}
\]
Therefore
\[
 {\color{black}
 \begin{aligned}
 E_{\mathbf y}(g)
 &=(MW\mathbf x)^T(M^{-1})^T\nabla_{\mathbf x}f\\
 &=(W\mathbf x)^TM^T(M^{-1})^T\nabla_{\mathbf x}f\\
 &=(W\mathbf x)^T\nabla_{\mathbf x}f
 =d\,f(\mathbf x)=d\,g(\mathbf y).
 \end{aligned}}
\]
\end{proof}

\begin{remark}
The matrix $B$ need not be diagonal.  The argument below uses only that $B$
is constant and invertible and that
{\color{black}$(B\mathbf y)^T\nabla_{\mathbf y}g=d\,g$}; no diagonal form
is required after the coordinate change.
\end{remark}

\subsection{The differentiated Euler identity}

Let
\[
 H=\Hess(g),
 \qquad
 {\color{black}\mathbf L=B\mathbf y=(L_1,\ldots,L_n)^T.}
\]
{\color{black}This is the same coefficient vector as in \eqref{eq:linear-euler}.
Differentiating $\mathbf L^T\nabla_{\mathbf y}g=d\,g$ gives}
\begin{equation}\label{eq:covariant-euler}
 {\color{black}H\mathbf L+B^T\nabla_{\mathbf y}g=d\nabla_{\mathbf y}g},
\end{equation}
and therefore
\begin{equation}\label{eq:HL-J}
 {\color{black}H\mathbf L=(dI-B^T)\nabla_{\mathbf y}g\in\Jac(g)^{\oplus n}},
 \qquad
 \Jac(g)=(g_1,\ldots,g_n).
\end{equation}
{\color{black}Here $\Jac(g)^{\oplus n}$ means that every component of $H\mathbf L$
belongs to the Jacobian ideal; it does not denote an ideal power.
}

The next lemma converts the differentiated Euler identity into the exterior
relations needed for the second order construction.

\begin{lemma}[Adjugate wedge lemma]\label{lem:adjugate-wedge}
Let $R$ be a commutative ring, let $I\subset R$ be an ideal, and let $H$ be a symmetric $n\times n$ matrix over $R$.  Suppose that ${\color{black}\mathbf L}\in R^n$ satisfies
\[
 H{\color{black}\mathbf L}\in I^{\oplus n}.
\]
For every constant vector ${\color{black}\mathbf c}\in k^n$, put
\[
 {\color{black}\mathbf a}=\adj(H){\color{black}\mathbf c}.
\]
Then
\[
 a_iL_j-a_jL_i\in I
 \qquad\text{for all }i,j.
\]
\end{lemma}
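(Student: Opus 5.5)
The plan is to reduce to a statement about a single column of $\adj(H)$ and then prove it by a Cramer-type determinant expansion. Since $\mathbf a=\adj(H)\mathbf c=\sum_k c_k\,\adj(H)\mathbf e_k$ and $I$ is an ideal, it suffices to show, for each fixed $k$ and each pair $i\neq j$, that
\[
 \adj(H)_{ik}L_j-\adj(H)_{jk}L_i\in I .
\]
The case $i=j$ is trivial.

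Fix $k$ and let $G$ be the $(n-1)\times n$ matrix obtained from $H$ by deleting row $k$. By definition of the adjugate, $\adj(H)_{ik}=(-1)^{i+k}\det G^{(i)}$, where $G^{(i)}$ is $G$ with column $i$ deleted. Thus the $k$-th column of $\adj(H)$ is, up to a global sign, the vector $m=(m_1,\ldots,m_n)$ of signed maximal minors of $G$, with $m_i=(-1)^i\det G^{(i)}$. Write $G_1,\ldots,G_n$ for the columns of $G$, and put $\mathbf y=G\mathbf L=\sum_l L_lG_l$. This is $H\mathbf L$ with its $k$-th entry removed, so every entry of $\mathbf y$ lies in $I$ by hypothesis.

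The key step is the identity, for $i<j$,
\[
 L_j\det G^{(i)}-(-1)^{j-i-1}L_i\det G^{(j)}
 =\det\bigl(G^{(i)}\text{ with the column }G_j\text{ replaced by }\mathbf y\bigr).
\]
To prove it, expand the right-hand side by multilinearity in the replaced column, using $\mathbf y=\sum_l L_lG_l$. Every term with $l\neq i,j$ vanishes, since it has the repeated column $G_l$. The term $l=j$ gives $L_j\det G^{(i)}$. The term $l=i$ gives $L_i$ times the determinant of $G$ with column $i$ moved into position $j$ and column $j$ deleted; moving that column back to its natural place produces the sign $(-1)^{j-i-1}$.

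Expanding the right-hand side along the column $\mathbf y$ shows that it lies in $I$. Multiplying through by $(-1)^{i+k}$ converts the left-hand side into $\pm\bigl(\adj(H)_{ik}L_j-\adj(H)_{jk}L_i\bigr)$, which is exactly the required membership. Summing over $k$ with the constant weights $c_k$ then finishes the proof.

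The only real obstacle is the sign bookkeeping in the column permutation. To be safe I will phrase the identity as ``$m_iL_j-m_jL_i=\pm\det(\cdots)$'', since only membership in $I$ matters. Symmetry of $H$ is not actually needed in this argument. It only matters if one prefers to read $\adj(H)_{ik}$ as a cofactor obtained by deleting row $i$ rather than column $i$, in which case one deletes column $k$ and works with rows.
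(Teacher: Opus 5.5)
Your argument is correct and is essentially the paper's proof. The paper also fixes a column of $\adj(H)$ (equivalently, a deleted row $r$ of $H$) and writes $L_i\cof_{rj}(H)-L_j\cof_{ri}(H)=\sum_{s\neq r}(H\mathbf L)_s\cof_{sr,ij}(H)$, which is what you get by expanding your column-replaced determinant along the column $G\mathbf L$; it then sums over $r$ with the constants $c_r$.

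One small sign fix is needed in your displayed identity. Your own multilinear expansion gives $L_j\det G^{(i)}+(-1)^{j-i-1}L_i\det G^{(j)}$, not the minus you wrote. For instance, with $n=2$ and $k=1$ the determinant is $H_{21}L_1+H_{22}L_2$. It is this plus sign that makes multiplication by $(-1)^{i+k}$ produce $\adj(H)_{ik}L_j-\adj(H)_{jk}L_i$. Your fallback form $m_iL_j-m_jL_i=\pm\det(\cdots)$ is correct as stated.
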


\begin{proof}
Write $\cof_{ri}(H)$ for the $(r,i)$-cofactor of $H$.  Thus
\[
 \cof_{ri}(H)=(-1)^{r+i}\det H_{\widehat r,\widehat i}.
\]
The $r$-th column of $\adj(H)$ is
\[
 K_r=\bigl(\cof_{r1}(H),\ldots,\cof_{rn}(H)\bigr)^T.
\]
Using the standard alternating convention for double cofactors, a two-row, two-column Laplace expansion gives
\begin{equation}\label{eq:double-cofactor}
 L_i\cof_{rj}(H)-L_j\cof_{ri}(H)
 =\sum_{s\neq r}(H{\color{black}\mathbf L})_s\cof_{sr,ij}(H),
\end{equation}
where $\cof_{sr,ij}(H)$ is the signed complementary minor obtained by deleting
rows $s,r$ and columns $i,j$.  Since every $(H{\color{black}\mathbf L})_s$ belongs to $I$, the
right side of \eqref{eq:double-cofactor} belongs to $I$.  Thus each $K_r$ is
proportional to ${\color{black}\mathbf L}$ modulo $I$ in the sense that all their $2\times2$ minors
vanish modulo $I$.

Now write ${\color{black}\mathbf c}=(c_1,\ldots,c_n)^T$.  Since
\[
 a_i=\sum_{r=1}^nc_r\cof_{ri}(H),
\]
we obtain
\[
 a_iL_j-a_jL_i
 =\sum_{r=1}^nc_r
 \bigl(\cof_{ri}(H)L_j-\cof_{rj}(H)L_i\bigr)\in I.
\]
\end{proof}

\begin{remark}
For $n=2$, if
\[
 H=\begin{pmatrix}p&q\\q&r\end{pmatrix},
 \qquad {\color{black}\mathbf L}=\begin{pmatrix}u\\v\end{pmatrix},
\]
then $H{\color{black}\mathbf L}\in I^{\oplus2}$ means $pu+qv,qu+rv\in I$.  The first column $(r,-q)^T$ of $\adj(H)$ satisfies
\[
 rv+qu\in I,
\]
and the second column $(-q,p)^T$ satisfies
\[
 -qv-pu\in I.
\]
This is the elementary model for Lemma~\ref{lem:adjugate-wedge}.
\end{remark}

\subsection{A tuple symmetric modulo the Jacobian ideal}

Fix the first coordinate and put
\begin{equation}\label{eq:c-a-definition}
 {\color{black}\mathbf c=\nabla_{\mathbf y}L_1
 =\left(\frac{\partial L_1}{\partial y_1},\ldots,
 \frac{\partial L_1}{\partial y_n}\right)^T
 =(B_{11},\ldots,B_{1n})^T},
 \qquad
 {\color{black}\mathbf a=\adj(H)\mathbf c}.
\end{equation}
Define
\begin{equation}\label{eq:d-tuple}
 d_i=a_iE.
\end{equation}
By \eqref{eq:HL-J} and Lemma~\ref{lem:adjugate-wedge},
\begin{equation}\label{eq:symmetric-mod-J}
 d_i(y_j)-d_j(y_i)
 =a_iL_j-a_jL_i
 \in\Jac(g).
\end{equation}

\begin{lemma}[Hamiltonian correction lemma]\label{lem:correction}
Let $d_1,\ldots,d_n\in\Der^1(A)$.  If
\[
 d_i(y_j)-d_j(y_i)\in\Jac(g)
 \qquad\text{for all }i,j,
\]
then there exist $d_1',\ldots,d_n'\in\Der^1(A)$ such that
\[
 (d_1',\ldots,d_n')\in\mathscr D^2(A)
\]
and every difference $d_i'-d_i$ is an $A$-linear combination of the Hamiltonian derivations $D_{pq}$.
Moreover, the correction may be chosen so that
\[
 (d_i'-d_i)(y_i)\in(g_1,\ldots,\widehat{g_i},\ldots,g_n)
 \qquad\text{for every }i.
\]
\end{lemma}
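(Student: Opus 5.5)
We write the defects as
\[
 d_i(y_j)-d_j(y_i)=\sum_{p=1}^n h^{ij}_p\,g_p,
 \qquad h^{ij}_p\in A.
\]
We may take $h^{ji}_p=-h^{ij}_p$ (antisymmetrize the chosen expression) and $h^{ii}_p=0$. The plan is to find correction terms $\epsilon_i$, each an $A$-linear combination of the $D_{pq}$, with
\[
 \epsilon_i(y_j)-\epsilon_j(y_i)=-(d_i(y_j)-d_j(y_i)).
\]
Then $d_i'=d_i+\epsilon_i$ lies in $\mathscr D^2(A)$. Each $D_{pq}$ is a derivation of $A$, since $D_{pq}(g)=g_pg_q-g_qg_p=0$, so $d_i'\in\Der^1(A)$ automatically. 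The computation needed is $D_{pq}(y_j)=g_p\delta_{qj}-g_q\delta_{pj}$.

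The natural ansatz is
\[
 \epsilon_i=\sum_{j}\lambda_{ij}\,D_{ij},
\]
or more generally a combination $\sum_{p<q}c^i_{pq}D_{pq}$ with coefficients to be determined. For the first choice, $\epsilon_i(y_j)=\lambda_{ij}g_i-\sum_k\lambda_{ik}g_k\delta_{ij}$. For $i\neq j$ this reduces to $\epsilon_i(y_j)=\lambda_{ij}g_i$, so the off-diagonal defect becomes $\lambda_{ij}g_i-\lambda_{ji}g_j$. This only produces multiples of $g_i$ and $g_j$, while the defect may involve every $g_p$, so the ansatz is too narrow. I would therefore use the full freedom. Setting $\epsilon_i=\sum_{p<q}c^i_{pq}D_{pq}$ and extending $c^i$ antisymmetrically gives
\[
 \epsilon_i(y_j)=\sum_{p}c^i_{pj}\,g_p .
\]
The condition becomes
\[
 \sum_p\bigl(c^i_{pj}-c^j_{pi}\bigr)g_p=-\sum_p h^{ij}_p\,g_p .
\]
It suffices to solve coefficientwise: find $c^i_{pj}\in A$, antisymmetric in $(p,j)$, with
\[
 c^i_{pj}-c^j_{pi}=-h^{ij}_p .
\]

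This is a linear-algebra problem: a $3$-index tensor, antisymmetric in its last two slots, whose antisymmetrization in the first and third slots is a prescribed tensor antisymmetric in $(i,j)$. An explicit solution should be
\[
 c^i_{pj}=\tfrac12\bigl(-h^{ij}_p+h^{ip}_j-h^{pj}_i\bigr),
\]
possibly up to sign conventions, which I would verify. Here we use that $\operatorname{char}k=0$, so $\tfrac12\in k$. The check has two parts. First, antisymmetry in $(p,j)$ follows from $h^{ij}=-h^{ji}$: swapping $p$ and $j$ sends $-h^{ij}_p+h^{ip}_j-h^{pj}_i$ to $-h^{ip}_j+h^{ij}_p-h^{jp}_i$, which is its negative. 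Second, compute $c^i_{pj}-c^j_{pi}$. The terms $h^{ip}_j$ and $h^{jp}_i$ combine to cancel against the third terms, and this leaves exactly $-h^{ij}_p$. This is the main obstacle: getting the index bookkeeping and signs right. If the symmetric formula fails, I would fall back on solving the equations pair by pair, ordering the pairs $i<j$ and absorbing each defect into $\epsilon_j$ using only $D_{pi}$ terms.

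Finally, consider the diagonal value. With this choice,
\[
 \epsilon_i(y_i)=\sum_p c^i_{pi}\,g_p ,
\]
and $c^i_{ii}=0$ by antisymmetry. Hence $\epsilon_i(y_i)\in(g_1,\ldots,\widehat{g_i},\ldots,g_n)$, which is the second assertion. Every $D_{pq}$ has zero $\partial_{y_i}$-coefficient unless $i\in\{p,q\}$, and then that coefficient is the $g$ with the other index, which gives the same conclusion directly.
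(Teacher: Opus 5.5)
Your proposal is correct and is essentially the paper's proof. Your $h^{ij}_p$ and $c^i_{pj}$ are the paper's $T_{ijp}$ and $C_{ipj}$, and your formula $c^i_{pj}=\tfrac12(-h^{ij}_p+h^{ip}_j-h^{pj}_i)$ is the paper's $C_{ipj}=-\tfrac12(T_{ijp}-T_{jpi}+T_{pij})$ once antisymmetry in the first two indices is applied, so no sign adjustment or pair-by-pair fallback is needed; the diagonal assertion then follows from $c^i_{ii}=0$, as in the paper.
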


\begin{proof}
Put
\[
 s_{ij}=d_i(y_j)-d_j(y_i).
\]
Choose coefficients $T_{ijp}\in A$ such that
\begin{equation}\label{eq:T-representation}
 s_{ij}=\sum_{p=1}^nT_{ijp}g_p,
 \qquad T_{ijp}=-T_{jip}.
\end{equation}
This can be done by choosing a representation for $s_{ij}$ when $i<j$ and
using its negative for $s_{ji}$.

For $1\leq i,p,j\leq n$, define
\begin{equation}\label{eq:C-definition}
 C_{ipj}=-\frac12
 \bigl(T_{ijp}-T_{jpi}+T_{pij}\bigr).
\end{equation}
The skew-symmetry of $T$ in its first two indices gives
\[
 C_{ipj}=-C_{ijp}.
\]
A direct substitution in \eqref{eq:C-definition} also gives
\begin{equation}\label{eq:C-difference}
 C_{ipj}-C_{jpi}=-T_{ijp}.
\end{equation}
For each $i$, set
\[
 h_i=\sum_{1\leq p<q\leq n}C_{ipq}D_{pq}.
\]
Since $D_{pq}=g_p\partial_{y_q}-g_q\partial_{y_p}$ and
$C_{ipq}$ is skew in $p,q$, we have
\[
 h_i(y_j)=\sum_{p=1}^n C_{ipj}g_p.
\]
Consequently, \eqref{eq:T-representation} and
\eqref{eq:C-difference} imply
\[
 h_i(y_j)-h_j(y_i)
 =\sum_{p=1}^n(C_{ipj}-C_{jpi})g_p
 =-s_{ij}.
\]
Thus $d_i'=d_i+h_i$ satisfies
\[
 d_i'(y_j)=d_j'(y_i)
 \qquad\text{for all }i,j,
\]
and hence $(d_1',\ldots,d_n')\in\mathscr D^2(A)$.
Finally,
\[
 h_i(y_i)=\sum_{p\neq i}C_{ipi}g_p
 \in(g_1,\ldots,\widehat{g_i},\ldots,g_n),
\]
which proves the last assertion.
\end{proof}

\begin{remark}
For the first component, the preceding construction gives
\begin{equation}\label{eq:first-correction}
 (d_1'-d_1)(y_1)\in(g_2,\ldots,g_n),
\end{equation}
because $D_{pq}(y_1)$ is either zero or one of $\pm g_j$ with $j\neq1$.
\end{remark}

\section{A generic polar section}\label{sec:polar}

We now choose the linear coordinate $y_1$.  Let $\ell$ be a general linear form such that the restriction
\[
 f|_{\ell=0}
\]
defines an isolated hypersurface singularity.  Such linear forms form a nonempty Zariski open subset of the dual projective space; see \cite{Teissier,FloresTeissier}.  Complete
{\color{black}$y_1=\ell(\mathbf x)$ to a linear coordinate system
$\mathbf y=M\mathbf x$} and retain the notation of
Section~\ref{sec:covariant}.  Thus
\[
 {\color{black}L_1=E(y_1)=\ell(W\mathbf x).}
\]

\begin{lemma}[Polar finiteness lemma]\label{lem:polar-primary}
With the above choice of $\ell$, the ideal
\[
 Q=(L_1,g_2,\ldots,g_n)
\]
is $\m$-primary in the local ring at the origin.
\end{lemma}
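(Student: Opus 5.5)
The plan is to show that the zero set of $Q$ near the origin consists of the origin alone, by studying the polar locus $\Gamma=V(g_2,\ldots,g_n)$ relative to the coordinate $y_1=\ell$. We work analytically, over $k=\mathbb C$ or, in general, with formal branches via Puiseux parametrizations.

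\emph{Step 1 ($\Gamma$ is a curve germ).} A point of $\Gamma\cap\{y_1=0\}$ is a critical point of the restriction $g|_{y_1=0}$, because $g_2,\ldots,g_n$ are exactly the partial derivatives of that restriction. By the choice of $\ell$, this restriction (equivalently $f|_{\ell=0}$) has an isolated singularity at the origin. Hence $\Gamma\cap\{y_1=0\}=\{0\}$ near $0$, and therefore $\dim_0\Gamma\leq1$. Since $\Gamma$ is cut out by $n-1$ equations, every irreducible component of $\Gamma$ through $0$ has dimension $\geq1$. So, unless $\Gamma=\{0\}$ (in which case we are done), the germ $\Gamma$ is a finite union of curve branches.

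\emph{Step 2 ($L_1$ vanishes on no branch).} It suffices to show that $L_1$ does not vanish identically on any branch $\gamma(t)$, $\gamma(0)=0$, of $\Gamma$. Suppose it does. Along $\gamma$ we have $g_2=\cdots=g_n=0$, so the Euler identity $E(g)=dg$ of Lemma~\ref{lem:euler-transform} reduces to
\[
 d\,g(\gamma(t))=L_1(\gamma(t))\,g_1(\gamma(t))=0 .
\]
Thus $g\equiv0$ on $\gamma$. Differentiating along the branch gives
\[
 0=\tfrac{d}{dt}g(\gamma(t))=g_1(\gamma(t))\,\tfrac{d}{dt}y_1(\gamma(t)).
\]
We now distinguish two cases.
\begin{itemize}
\item If $g_1\equiv0$ on $\gamma$, then all partials of $g$ vanish along a curve. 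This contradicts the isolatedness of the singularity of $g$, which is preserved by the linear change of coordinates.
\item Otherwise $y_1$ is constant along $\gamma$, hence $y_1\equiv0$ there. Then $\gamma\subset\Gamma\cap\{y_1=0\}=\{0\}$ by Step 1, which is again a contradiction.
\end{itemize}

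\emph{Step 3 (conclusion).} By Step 2, $V(Q)=\Gamma\cap\{L_1=0\}$ is $\{0\}$ near the origin. By the Nullstellensatz (or Rückert's theorem in the local ring), $Q$ is $\m$-primary.

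The main obstacle is Step 1: one must correctly identify $g_2,\ldots,g_n$ with the partials of the restriction to $\{y_1=0\}$, and must check that the generic-hyperplane hypothesis, stated for $f|_{\ell=0}$, transfers to $g|_{y_1=0}$ under $\mathbf y=M\mathbf x$. The key point in Step 2 is that only the Euler identity is used, not any weighted homogeneity of $L_1$ or of the $g_j$ in the $\mathbf y$-coordinates. This is why the non-diagonal linear Euler field suffices.
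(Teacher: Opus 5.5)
Your proposal is correct and is essentially the paper's argument. You take a curve in $V(Q)$ through the origin, use the Euler identity with $L_1=0$ and $g_2=\cdots=g_n=0$ to force $g\equiv0$, differentiate to get $g_1\,dy_1=0$, rule out $g_1\equiv0$ by isolatedness, and conclude that the curve lies in the critical locus of $g|_{y_1=0}$, contradicting the choice of $\ell$. The differences are cosmetic: you differentiate along a (formal) branch parametrization rather than in $\Omega_{K/k}$ of the curve's function field, and you apply the generic-section hypothesis up front to see that $V(g_2,\ldots,g_n)$ is at most a curve, instead of cutting down by general linear subspaces.
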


\begin{proof}
Suppose otherwise.  Then the germ
\[
 V(L_1,g_2,\ldots,g_n)
\]
contains a positive-dimensional irreducible subvariety through the origin.
After intersecting with general linear subspaces, we obtain an irreducible
curve $C$ through the origin which is still contained in this germ.  Let
$K=k(C)$ be its function field.  In $\Omega_{K/k}$ the equations
$g_2=\cdots=g_n=0$ give
\begin{equation}\label{eq:polar-lambda}
 dg=g_1\,dy_1.
\end{equation}

The Euler identity and $L_1=0$ on $C$ give
\[
 d\,g=E(g)=\sum_iL_ig_i=L_1g_1=0.
\]
Hence $g=0$ in $K$.  Differentiating this equality and using
\eqref{eq:polar-lambda}, we obtain
\[
 0=dg=g_1\,dy_1
 \qquad\text{in }\Omega_{K/k}.
\]
The function $g_1$ is nonzero in $K$, since otherwise all first partial
derivatives of $g$ would vanish on $C$, contradicting the isolatedness of the
critical point.  Thus $dy_1=0$ in $\Omega_{K/k}$.  Since $K/k$ is a function
field of one variable, $k$ is algebraically closed, and $\operatorname{char}k=0$,
the kernel of $d:K\to\Omega_{K/k}$ is $k$.  Therefore $y_1$ is constant on
$C$.  Because $C$ passes through the origin, this constant is zero.

Thus $C$ is contained in $\{y_1=0\}$ and satisfies
\[
 g_2=\cdots=g_n=0.
\]
It is consequently a positive-dimensional component of the critical locus of the restriction $g|_{y_1=0}$, contradicting the choice of $\ell$.
\end{proof}

\begin{corollary}\label{cor:squared-primary}
The ideal
\[
 Q'=(L_1^2,g_2,\ldots,g_n)
\]
is $\m$-primary and is generated by a regular sequence.  In particular, the quotient by $Q'$ is a zero-dimensional complete intersection.
\end{corollary}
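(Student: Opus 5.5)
The plan is to deduce the corollary from Lemma~\ref{lem:polar-primary} by comparing radicals, and then to use the Cohen--Macaulay property of the regular local ring. All statements are meant in the local ring $P_{\m}$ at the origin, or equivalently in the completion $k[[\mathbf y]]$.

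First, $Q'\subseteq Q$. Moreover $L_1^2\in Q'$, so $L_1\in\sqrt{Q'}$, and therefore $Q\subseteq\sqrt{Q'}$. This gives
\[
 \sqrt{Q'}=\sqrt{Q}=\m,
\]
where the last equality is Lemma~\ref{lem:polar-primary}. Thus $Q'$ is $\m$-primary. Geometrically, the zero sets of $Q$ and $Q'$ agree: $V(L_1^2,g_2,\ldots,g_n)=V(L_1,g_2,\ldots,g_n)$, and this set is the origin as a germ.

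Second, $Q'$ is generated by the $n$ elements $L_1^2,g_2,\ldots,g_n$. These lie in $\m$, since $g$ is singular at the origin so every $g_i$ vanishes there, and $L_1$ is linear. The ring $P_{\m}$ is regular of dimension $n$, hence Cohen--Macaulay. In a Cohen--Macaulay local ring, $n$ elements of $\m$ generating an ideal of height $n$ form a system of parameters and hence a regular sequence, in any order. Consequently $P_{\m}/Q'$ is Artinian and is a complete intersection, that is, a zero-dimensional complete intersection.

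There is no real obstacle beyond Lemma~\ref{lem:polar-primary}. The one point requiring care is the ambient setting. Globally, $V(Q')$ may contain points other than the origin, so the regular-sequence claim should be read in $P_{\m}$. This matches how Saito's theorem will be applied. If a global statement is wanted, one can note that $V(L_1,g_2,\ldots,g_n)$ is a cone for the $\mathbf{k}^*$-action generated by $E$. The point is that $B=MWM^{-1}$ is diagonalizable with positive eigenvalues, and $L_1$ and the $g_j$ are combinations of eigen-homogeneous pieces. Hence the origin is the only point of this cone, and the local and global statements coincide. I would only mention this as a remark.
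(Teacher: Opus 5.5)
Your proof of the corollary is correct and is essentially the paper's argument. You use $L_1^2\in Q'\subseteq Q$ and Lemma~\ref{lem:polar-primary} to get $\sqrt{Q'}=\sqrt{Q}=\m$. Then $n$ elements of $\m$ generating an $\m$-primary ideal in the $n$-dimensional regular, hence Cohen--Macaulay, local ring form a system of parameters and therefore a regular sequence. Your explicit checks that the $g_i$ lie in $\m$ and that everything is read in $P_{\m}$ are good additions.

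You should, however, drop the closing remark about a global version, because it is false for genuinely weighted (non-homogeneous) $f$. Your reasoning is that $B$ is diagonalizable with positive eigenvalues and that $L_1$ and the $g_j$ are combinations of $E$-eigen-pieces. This does not make their common zero set $k^*$-stable when the pieces have different weights. Already the hyperplane $V(L_1)=\{\ell(W\mathbf x)=0\}$ is stable under $x_j\mapsto t^{w_j}x_j$ only if the coefficients $\ell_jw_j$ that are nonzero all sit on coordinates of a single weight. That fails for a general $\ell$.

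The conclusion fails too, as the paper's own loop example shows. There the Gr\"obner basis element $y_3^3(32y_3+27)$ signals a second point, and indeed
$(y_1,y_2,y_3)=(-27/64,\,243/256,\,-27/32)$
satisfies $L_1=g_2=g_3=0$. So $V(L_1,g_2,\ldots,g_n)$ can contain points other than the origin, and the local and global statements do not coincide in general. The remark is valid only in the ordinary homogeneous case $W=wI$, where $L_1$ and the $g_j$ are genuinely homogeneous.

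None of this affects your main argument. The corollary, like Saito's theorem as used afterwards, is meant in the local ring at the origin. Local non-containment is also all that the proof of Theorem~\ref{thm:main} needs.
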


\begin{proof}
Since $L_1^2\in Q'$ and $Q'\subseteq Q$,
\[
 \sqrt{Q'}=\sqrt Q=\m.
\]
The ambient local ring is a regular local ring of dimension $n$, hence Cohen--Macaulay.  The $n$ generators
\[
 L_1^2,g_2,\ldots,g_n
\]
form a system of parameters, and every system of parameters in a
Cohen--Macaulay local ring is a regular sequence; see, for example,
\cite{Matsumura}.
\end{proof}

\begin{remark}
The $\m$-primary assumption is essential.  For example, in $k[x,y]_{(x,y)}$ the ideal $(x^2,xy)$ has two generators but height one, and the sequence $x^2,xy$ is not regular.  Its Jacobian determinant is
\[
 \det\begin{pmatrix}2x&0\\y&x\end{pmatrix}=2x^2\in(x^2,xy),
\]
so the non-containment used below fails without the zero-dimensional complete-intersection hypothesis.
\end{remark}

\begin{remark}
The two linear forms $y_1$ and $L_1=E(y_1)$ need not be proportional.
Lemma~\ref{lem:polar-primary} bridges this gap: the slice is chosen by $y_1$,
whereas the ideal required by the second order obstruction contains $L_1$.
\end{remark}

We recall Saito's non-containment theorem.

\begin{theorem}[Saito \cite{Saito}]\label{thm:saito}
Put
\[
 R=k[y_1,\ldots,y_n]_{(y_1,\ldots,y_n)}.
\]
Let $h_1,\ldots,h_n\in R$.  If $R/(h_1,\ldots,h_n)$ is Artinian, then
\[
 \det\left(\frac{\partial(h_1,\ldots,h_n)}
 {\partial(y_1,\ldots,y_n)}\right)
 \notin(h_1,\ldots,h_n).
\]
For a zero-dimensional complete intersection, the residue class of this
determinant is nonzero and generates the socle up to a unit.  The assertion
over any algebraically closed field of characteristic zero follows from the
algebraic socle description of an Artinian complete intersection.
\end{theorem}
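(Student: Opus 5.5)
We argue through Grothendieck local duality for the Artinian complete intersection $R/(h_1,\ldots,h_n)$. First, the statement is vacuous unless all $h_i\in\m$: if some $h_i$ is a unit then $(h_1,\ldots,h_n)=R$ and the containment holds trivially. So we assume $h_i\in\m$ throughout, which is the situation of Corollary~\ref{cor:squared-primary}, where $h=(L_1^2,g_2,\ldots,g_n)$. Since $R/(h)$ is Artinian, it equals its completion, so we may pass to $\widehat R=k[[y_1,\ldots,y_n]]$ without changing the quotient or the containment question. Because $(h)$ is $\m$-primary in a regular local ring of dimension $n$, the $h_i$ form a regular sequence, exactly as in Corollary~\ref{cor:squared-primary}.

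Write $h_i=\sum_j a_{ij}y_j$ with $a_{ij}\in\widehat R$, and put $\Delta=\det(a_{ij})$. The first step is Wiebe's lemma: since both $(h)$ and $(y)$ are generated by regular sequences, $(h):\m=(h)+(\Delta)$, and the class of $\Delta$ generates the one-dimensional socle of the Gorenstein ring $R/(h)$. The proof uses the comparison map of Koszul complexes $K(h)\to K(y)$ induced by $(a_{ij})$, whose top degree component is multiplication by $\Delta$. Applying $\operatorname{Ext}^n_{\widehat R}(k,-)$ or local cohomology to this map identifies $\Delta$ with the socle generator. In particular $\Delta\notin(h)$.

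The second step is the Scheja--Storch trace formula
\[
 \det\left(\frac{\partial(h_1,\ldots,h_n)}{\partial(y_1,\ldots,y_n)}\right)\equiv \mu\,\Delta \pmod{(h)},
 \qquad \mu=\dim_k R/(h).
\]
The cleanest route is through Grothendieck residues: the residue symbol $\operatorname{Res}\bigl[\omega/(h_1,\ldots,h_n)\bigr]$ is a $k$-linear functional on $R/(h)$ that is nonzero on the socle and vanishes on $(h)$. Two standard facts then give the formula. The transformation law yields $\operatorname{Res}[\phi\,dy/h]=\operatorname{Res}[\phi\Delta\,dy/y]$, so $\operatorname{Res}[\Delta\,dy/h]=1$. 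The trace property yields $\operatorname{Res}[\det(\partial h/\partial y)\,dy/h]=\mu$, which is the degree of the finite flat map $h\colon(k^n,0)\to(k^n,0)$. Because the socle is one-dimensional, the Jacobian lies in the socle coset $(h)+k\Delta$ modulo $\m$-torsion contributions; combining the two residue values shows that its socle component is $\mu\Delta$.

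Finally, since $\operatorname{char}k=0$ and $\mu\geq1$, we get $\mu\neq0$ in $k$, so the Jacobian is congruent to a nonzero multiple of the socle generator and hence does not lie in $(h)$. I expect the trace identity $\operatorname{Res}[\det(\partial h/\partial y)\,dy/h]=\mu$ to be the main obstacle, together with the claim that the Jacobian lands in the socle rather than merely having nonzero residue. Only the residue is actually needed, since any element of $(h)$ has residue $0$, so I will argue with the residue alone. To prove the trace identity algebraically, I plan to use flatness of $\widehat R$ over $k[[h_1,\ldots,h_n]]$, of rank $\mu$, and compute the trace of the Jacobian via the Kunz--Scheja--Storch trace form. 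An alternative is to deform $h$ to $h-t$ for a generic $t$: the fiber then consists of $\mu$ simple points, at each of which the residue equals $1$, and continuity of the residue in $t$ gives the sum $\mu$.
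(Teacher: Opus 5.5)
Your residue argument is correct for the main non-containment, and it takes a different, more explicit route than the paper. The paper gives no proof. It cites Saito and asserts that the algebraic case follows from the socle description of an Artinian complete intersection, i.e.\ from the structural fact that the class of $J=\det(\partial h/\partial y)$ generates the socle of $R/(h)$.

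You reduce everything to one scalar. The functional $\phi\mapsto\operatorname{Res}[\phi\,dy/h]$ kills $(h)$, and the trace formula gives $\operatorname{Res}[J\,dy/h]=\operatorname{Tr}_{\widehat R/k[[h]]}(1)(0)=\mu$, which is nonzero because $\operatorname{char}k=0$. This makes Wiebe's lemma unnecessary for the non-containment and shows exactly where characteristic zero enters. The socle statement fails for $h=y^p$ in characteristic $p$, so the paper's ``socle description'' must really be the Scheja--Storch identity $J\equiv\mu\Delta$, which rests on the same trace theory you invoke. What the paper's formulation buys is the stronger second sentence of the statement: $J$ generates the socle.

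Over an arbitrary algebraically closed $k$, use the algebraic trace formula (Kunz, Scheja--Storch) rather than the deformation/continuity argument. The latter is analytic and would need a Lefschetz-principle reduction to $k=\mathbb{C}$.

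Three points need repair.
\begin{itemize}
\item If some $h_i$ is a unit, the statement is not vacuous but false, since $J\in R=(h)$. So the hypothesis $(h)\subseteq\m$ must be added, as you effectively do.
\item The transformation law is reversed. It reads $\operatorname{Res}[\phi\,dy/y]=\operatorname{Res}[\phi\Delta\,dy/h]$, and $\operatorname{Res}[\Delta\,dy/h]=1$ is the case $\phi=1$. Your version with $\phi=\Delta$ would give $\Delta(0)^2$, which is $0$ whenever $\mu>1$.
\item The sentence deriving $J\equiv\mu\Delta$ from ``the two residue values'' is not an argument. Those two values only place $J-\mu\Delta$ in the kernel of one linear functional, a hyperplane of $R/(h)$, and ``modulo $\m$-torsion'' has no content in an Artinian ring.
\end{itemize}

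To obtain the socle assertion from residues, use the trace formula for every $\phi$. The value $\operatorname{Res}[\phi J\,dy/h]=\operatorname{Tr}(\phi)(0)$ is the trace of multiplication by $\phi$ on $R/(h)$, hence $0$ for $\phi\in\m$ by nilpotency. Applying this to $\phi\psi$ with $\psi\in\m$, nondegeneracy of the residue pairing gives $J\m\subseteq(h)$. Wiebe's lemma then gives $J\equiv c\Delta$ with $c\in k$, and comparing residues gives $c=\mu$. Alternatively, simply cite Scheja--Storch for this part.
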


\section{Proof of the main theorem}\label{sec:main-proof}

\begin{theorem}\label{thm:main}
Let $f\in k[x_1,\ldots,x_n]$ be weighted homogeneous and suppose that $f$ defines an isolated hypersurface singularity at the origin.  Put
\[
 A=k[x_1,\ldots,x_n]/(f).
\]
If $A$ is singular, then
\[
 \operatorname{der}^2(A)\neq\Der^2(A).
\]
\end{theorem}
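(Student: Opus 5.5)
We assemble the pieces of Sections~\ref{sec:necessary}--\ref{sec:polar}. Since $A$ is singular and $f$ has an isolated singularity, the singular point is the origin. Hence $f\in\m^2$, and $n\geq 2$ (for $n=1$ the ring $k[x]/(x^m)$ with $m\ge2$ is not reduced; we assume $n\geq2$ or treat it separately). We choose a general linear form $\ell$ as in Section~\ref{sec:polar}, complete $y_1=\ell$ to coordinates $\mathbf y=M\mathbf x$, and pass to $g$, $B=MWM^{-1}$ and $\mathbf L=B\mathbf y$. Lemma~\ref{lem:euler-transform} shows that we are in the setting of \eqref{eq:linear-euler}, with $B$ invertible because the $w_i>0$. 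Since $A\cong k[\mathbf y]/(g)$, it suffices to produce $D\in\Der^2(A)\setminus\operatorname{der}^2(A)$.

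We take $\mathbf c$ and $\mathbf a=\adj(H)\mathbf c$ as in \eqref{eq:c-a-definition} and set $d_i=a_iE$. By \eqref{eq:symmetric-mod-J} and Lemma~\ref{lem:correction} we obtain a corrected tuple $(d_1',\ldots,d_n')\in\mathscr D^2(A)$, and Theorem~\ref{thm:singh-exact} lifts it to some $D\in\Der^2(A)$. By Corollary~\ref{cor:criterion} together with \eqref{eq:first-correction}, it is enough to show
\[
 d_1(y_1)=a_1L_1\notin Q'=(L_1^2,g_2,\ldots,g_n).
\]
This must hold in $k[\mathbf y]$ modulo $(g)$ as well. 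Because $g=d^{-1}\sum_iL_ig_i\in Q$, we check what $g$ contributes: $d\,g\equiv L_1g_1 \pmod{(g_2,\ldots,g_n)}$. So the condition becomes $a_1L_1\notin Q'+(L_1g_1)$, and we have to handle this extra generator.

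To apply Theorem~\ref{thm:saito} we compute the Jacobian determinant of $(L_1^2,g_2,\ldots,g_n)$. Expanding along the first row $2L_1\mathbf c^T$ gives
\[
 \det\frac{\partial(L_1^2,g_2,\ldots,g_n)}{\partial(y_1,\ldots,y_n)}=2L_1\sum_r c_r\cof_{1r}(H)=2L_1\,(\adj(H)\mathbf c)_1=2L_1a_1,
\]
using the symmetry of $H$. Corollary~\ref{cor:squared-primary} and Theorem~\ref{thm:saito} then give $a_1L_1\notin Q'$, and $a_1L_1$ generates the socle of $k[\mathbf y]_{\m}/Q'$.

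The main obstacle is the extra generator coming from $(g)$, namely showing $a_1L_1\notin Q'+(L_1g_1)$. We would argue via the socle: $L_1g_1$ lies in $\m Q$, and if $L_1g_1\notin Q'$, then the ideal it generates in the Artinian Gorenstein ring $k[\mathbf y]_\m/Q'$ contains the socle. So containment would fail, and we would need instead that $L_1g_1\in Q'$. We try to show this directly. Writing $g_1$ modulo $Q=(L_1,g_2,\ldots,g_n)$, weighted homogeneity suggests $g_1\in Q$, because $g\in\m Q$ forces $L_1g_1\in \m\cdot Q\cap(L_1)$. If that fails, the fallback is to replace the criterion by the weaker statement: Theorem~\ref{thm:necessary} applied in $A$ requires $d_1(y_1)\in I_1^2+(g)$. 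Then we verify non-containment of $2a_1L_1$ in $(L_1^2,g_2,\ldots,g_n,g)$ by using $g\in \m Q'$ modulo $L_1g_1$, together with a degree count in the graded Artinian ring. The socle degree is $\deg(a_1L_1)$, and $L_1g_1$ has strictly smaller weight than the socle, so $L_1g_1$ multiplied by elements of $\m$ can reach the socle only if the socle element is a genuine multiple of it. We exclude this using the explicit form $a_1=(\adj H\,\mathbf c)_1$. Settling this step is where we expect the real work.
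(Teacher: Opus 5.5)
\textbf{Verdict: genuine gap.} Your reduction follows the paper's proof step for step: general $\ell$, the transported Euler field $E=(B\mathbf y)^T\nabla_{\mathbf y}$, $\mathbf a=\adj(H)\mathbf c$, $d_i=a_iE$, the Hamiltonian correction, Singh's lift, and $\det\partial(L_1^2,g_2,\ldots,g_n)/\partial(y_1,\ldots,y_n)=2L_1a_1$ combined with Saito's theorem in $R$.

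The paper's proof stops at that point and uses the containment of Theorem~\ref{thm:necessary} as if it held in $R$. You correctly observe that it only holds in $A$. What is actually required is therefore
\[
 L_1a_1\notin Q'+(g)=Q'+(L_1g_1),
\]
and your proposal leaves exactly this open. For this witness the step cannot be closed:
\begin{itemize}
\item Since $L_1$ is a nonzerodivisor on $R/(g_2,\ldots,g_n)$, we have $(Q':L_1)=Q$. Hence $L_1g_1\in Q'$ if and only if $g_1\in Q$.
\item If $g_1\notin Q$, your own Gorenstein remark puts the socle generator $L_1a_1$ into $Q'+(g)$ at the origin.
\item Let $p\neq0$ be a point of $V(Q'+(g))$. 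Then $p$ is a smooth point of $V(g)$ (by weighted homogeneity) with $g_2(p)=\cdots=g_n(p)=0$. So $g_1(p)\neq0$, and $L_1\in Q'+(g)$ near $p$.
\end{itemize}
Thus $g_1\notin Q$ gives $L_1a_1\in(L_1^2,g_2,\ldots,g_n)A$, and the test of Corollary~\ref{cor:criterion} detects nothing.

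The case $g_1\notin Q$ does occur; weighted homogeneity does not force $g_1\in Q$.
\begin{itemize}
\item In the paper's loop example,
\[
 R/Q\cong k[y_2,y_3]_{(y_2,y_3)}/(8y_2y_3+9y_3^2,\,4y_2^2-27y_3^2,\,y_3^3)
\]
has basis $1,y_2,y_3,y_3^2$. Meanwhile $g_1|_{t=0}=-3y_2y_3+y_3^3\equiv\frac{27}{8}y_3^2\neq0$. Correspondingly, $\Delta\equiv-702\,ty_3^2\equiv-104\,L_1g_1\equiv-1352\,g\pmod{Q'}$ in $R$.
\item For $x^3+y^3+z^3$ with $\ell=x+y+z$ and coordinates $y_1=\ell$, $y_2=y$, $y_3=z$, one finds $g_1|_{y_1=0}=3(y+z)^2\notin(z(2y+z),\,y(y+2z))$.
\end{itemize}

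Your degree-count fallback cannot help either. In an Artinian Gorenstein local ring every nonzero element has a multiple spanning the socle, whatever its weight. In any case, $L_1$ and the $g_j$ are not $E$-homogeneous when $B$ is not diagonal, so there is no grading to count with.

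Closing the gap needs a genuinely new ingredient. Examples would be a different lift, or a sharper obstruction that keeps $I_1^2$ and controls the correction term $h_1(y_1)$ of Lemma~\ref{lem:correction}. Neither your proposal nor the paper's argument as written provides one.
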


\begin{proof}
Choose a general linear form {\color{black}$y_1=\ell(\mathbf x)$} as in
Section~\ref{sec:polar}, complete it to a coordinate system
{\color{black}$\mathbf y=M\mathbf x$}, and put
{\color{black}$g(\mathbf y)=f(M^{-1}\mathbf y)$}.  Let
\[
 {\color{black}E=(B\mathbf y)^T\nabla_{\mathbf y}},
 \qquad H=\Hess(g),
 \qquad {\color{black}\mathbf c=\nabla_{\mathbf y}L_1},
 \qquad {\color{black}\mathbf a=\adj(H)\mathbf c}.
\]
Define $d_i=a_iE$.  By \eqref{eq:symmetric-mod-J} and Lemma~\ref{lem:correction}, there is a corrected tuple
\[
 (d_1',\ldots,d_n')\in\mathscr D^2(A).
\]
By Theorem~\ref{thm:singh-exact}, it lifts to a second order derivation $D\in\Der^2(A)$.

We claim that $D\notin\operatorname{der}^2(A)$.  First observe that
\begin{equation}\label{eq:d1-diagonal}
 d_1(y_1)=a_1E(y_1)=L_1a_1.
\end{equation}
Consider the complete intersection
\[
 (L_1^2,g_2,\ldots,g_n).
\]
Recall that $\adj(H)_{ij}=\cof_{ji}(H)$.  Since $H$ is symmetric, its
cofactor matrix is symmetric as well; hence
$a_1=\sum_{j=1}^n(\partial L_1/\partial y_j)\cof_{1j}(H)$.
Its Jacobian determinant is
\begin{align}
 \det\frac{\partial(L_1^2,g_2,\ldots,g_n)}
 {\partial(y_1,\ldots,y_n)}
 &=2L_1\sum_{j=1}^n
 \frac{\partial L_1}{\partial y_j}\cof_{1j}(H)\notag\\
 &=2L_1a_1.\label{eq:jacobian-L1}
\end{align}
Corollary~\ref{cor:squared-primary} and Theorem~\ref{thm:saito} give
\begin{equation}\label{eq:main-noncontainment}
 L_1a_1\notin(L_1^2,g_2,\ldots,g_n).
\end{equation}

Suppose, to the contrary, that $D\in\operatorname{der}^2(A)$.  By Theorem~\ref{thm:necessary},
\[
 d_1'(y_1)\in(L_1^2,g_2,\ldots,g_n).
\]
On the other hand, \eqref{eq:first-correction} gives
\[
 d_1'(y_1)-d_1(y_1)\in(g_2,\ldots,g_n).
\]
Together with \eqref{eq:d1-diagonal}, this implies
\[
 L_1a_1\in(L_1^2,g_2,\ldots,g_n),
\]
contradicting \eqref{eq:main-noncontainment}.  Therefore
\[
 D\notin\operatorname{der}^2(A).
\]
\end{proof}

\section{Examples}

In this section, we give several examples of fewnomial singularities to illustrate the details of the proof of Theorem  \ref{thm:main}.  For certain well-behaved weighted homogeneous cases, the construction conditions can be satisfied without applying any linear coordinate change; however, as explained in the loop type case argument (see Example \ref{loop}), in general weighted homogeneous settings such conditions may not be achievable without such a transformation. After performing a general linear change of coordinates, one can obtain  $L_1$ and $H$ that together satisfy the required construction conditions.

\begin{example}[Brieskorn type]
Let
\[
 f=x^2+y^3+z^6,
 \qquad (w_x,w_y,w_z;d)=(3,2,1;6).
\]
In the original weighted coordinates,
\[
 \cof_{11}(H)=180yz^4
\]
and
\[
 x\cof_{11}(H)=180xyz^4\notin(x^2,f_y,f_z)=(x^2,y^2,z^5).
\]
Thus the original single-cofactor construction already proves the result in this case.
\end{example}

\begin{example}[Chain type]
Let
\[
 f=x^2y+y^3z+z^5,
 \qquad (w_x,w_y,w_z;d)=(11,8,6;30).
\]
One computes
\[
 \cof_{11}(H)=3y(40z^4-3y^3).
\]
The remainder of $x\cof_{11}(H)$ modulo
\[
 (x^2,f_y,f_z)
 =(x^2,x^2+3y^2z,y^3+5z^4)
\]
is $-33xy^4\neq0$.  Hence the fixed-coordinate method also works here.
\end{example}

\begin{example}[Loop type]\label{loop}
Consider
\[
 f=x^2y+y^2z+z^3x,
 \qquad (w_x,w_y,w_z;d)=(4,5,3;13).
\]
In the original weighted coordinates, the naive diagonal candidates
$x_i\cof_{ii}(H)$ all lie in the corresponding ideals
\[
 (x_i^2,f_1,\ldots,\widehat{f_i},\ldots,f_n).
\]
For instance,
\[
 \cof_{11}(H)=12xz^2-4y^2,
 \qquad
 x\cof_{11}(H)=24z^2x^2-4xf_z
 \in(x^2,f_y,f_z).
\]
Thus a fixed cofactor need not provide the required witness.

Now make the linear change
\[
 y_1=x+2z,
 \qquad y_2=y,
 \qquad y_3=z.
\]
Then
\[
 B=MWM^{-1}
 =\begin{pmatrix}
 4&0&-2\\0&5&0\\0&0&3
 \end{pmatrix},
 \qquad
 L_1=4y_1-2y_3.
\]
Since $x=y_1-2y_3$, the transformed polynomial and its relevant partial
derivatives are
\begin{align*}
 g={}&y_1^2y_2-4y_1y_2y_3+y_1y_3^3
      +y_2^2y_3+4y_2y_3^2-2y_3^4,\\
 g_2={}&y_1^2-4y_1y_3+2y_2y_3+4y_3^2,\\
 g_3={}&-4y_1y_2+3y_1y_3^2+y_2^2+8y_2y_3-8y_3^3.
\end{align*}
Put
\[
 t=2y_1-y_3,
 \qquad L_1=2t.
\]
Substituting $y_1=(t+y_3)/2$ and clearing nonzero scalar factors gives
\begin{align*}
 F_2:=4g_2
 &=t^2-6ty_3+8y_2y_3+9y_3^2,\\
 F_3:=2g_3
 &=-4ty_2+3ty_3^2+2y_2^2+12y_2y_3-13y_3^3.
\end{align*}
Hence
\[
 (L_1,g_2,g_3)=(t,F_2,F_3).
\]
After setting $t=0$, a lexicographic Gr\"obner basis in
$k[y_2,y_3]$, with $y_2>y_3$, is
\begin{align*}
 4y_2^2-26y_3^3-27y_3^2,\qquad
 y_3(8y_2+9y_3),\qquad
 y_3^3(32y_3+27).
\end{align*}
Together with $t$, its leading monomials are
\[
 t,\qquad y_2^2,\qquad y_2y_3,\qquad y_3^4.
\]
Therefore $(L_1,g_2,g_3)$ is zero-dimensional.

We next verify the required non-containment directly.  Since nonzero scalar
factors do not change an ideal,
\[
 (L_1^2,g_2,g_3)=(t^2,F_2,F_3)=:Q'.
\]
For the lexicographic order $y_2>y_3>t$, a Gr\"obner basis of $Q'$,
up to multiplication of its elements by nonzero constants, is
\begin{align*}
 G_1={}&4y_2^2-8ty_2+6ty_3^2+18ty_3
          -26y_3^3-27y_3^2,\\
 G_2={}&y_3(8y_2+9y_3-6t),\\
 G_3={}&y_3^2(416y_3^2+351y_3-96ty_3-324t),\\
 G_4={}&t^2.
\end{align*}
Writing
\[
 \Delta=
 \det\frac{\partial(L_1^2,g_2,g_3)}
 {\partial(y_1,y_2,y_3)},
\]
a direct differentiation gives
\begin{align*}
 \Delta=-16t\bigl(&3t^2-7ty_2-6ty_3^2-18ty_3+4y_2^2\\
                  &{}+9y_2y_3+39y_3^3+27y_3^2\bigr).
\end{align*}
Division by $G_1,\ldots,G_4$ yields the normal form
\[
 \operatorname{NF}_{Q'}(\Delta)
 =-26ty_3^2(40y_3+27)\neq0.
\]
Consequently,
\[
 \Delta\notin(L_1^2,g_2,g_3).
\]
Thus in this example both the polar finiteness and the final non-containment
can be checked explicitly.  The covariant construction uses the linear
combination
\[
 {\color{black}\adj(\Hess(g))\nabla_{\mathbf y} L_1}
\]
rather than a single Hessian cofactor.  This example illustrates why the
covariant formulation is needed in the general weighted homogeneous case.
\end{example}

\begin{remark}
The proof of Theorem \ref{thm:main} is invariant under linear coordinate changes applied to $f$, but it is important to transform the Euler field together with the defining polynomial.  If one changes $f$ while continuing to use the diagonal field $\sum_iw_iy_i\partial_{y_i}$, the Euler identity is generally lost.  The correct field is
{\color{black}$(MWM^{-1}\mathbf y)^T\nabla_{\mathbf y}$}.
\end{remark}

\end{document}